\documentclass[11pt]{amsart}

\usepackage{a4wide}
\usepackage{amssymb}
\usepackage{mathrsfs}
\usepackage{enumerate}
\usepackage{esint}
\usepackage{titletoc}
\usepackage[colorlinks=true, urlcolor=blue, linkcolor=blue, citecolor=black]{hyperref}
\usepackage[nameinlink]{cleveref}

\theoremstyle{plain}
\newtheorem{theorem}{Theorem}[section]
\newtheorem{lemma}[theorem]{Lemma}
\newtheorem{corollary}[theorem]{Corollary}

\theoremstyle{definition}

\numberwithin{equation}{section}

\DeclareMathOperator*{\esssup}{ess\,sup}
\DeclareMathOperator*{\essinf}{ess\,inf}
\DeclareMathOperator{\osc}{osc}

\makeatletter
\@namedef{subjclassname@2020}{\textup{2020} Mathematics Subject Classification}
\makeatother


\title[BBM theorem for fractional Sobolev spaces with variable exponents]{Bourgain, Brezis and Mironescu theorem for fractional Sobolev spaces with variable exponents}

\author{Minhyun Kim}
\address{Fakult\"at f\"ur Mathematik, Universit\"at Bielefeld, 33615 Bielefeld, Germany}
\email{minhyun.kim@uni-bielefeld.de}

\subjclass[2020]{46E35}
\keywords{fractional Sobolev space, variable exponent, limiting embedding}
\thanks{Minhyun Kim gratefully acknowledges financial support by the German Research Foundation (GRK 2235 - 282638148).}


\begin{document}

\begin{abstract}
A Bourgain--Brezis--Mironescu-type theorem for fractional Sobolev spaces with variable exponents is established for sufficiently regular functions. We prove, however, that a limiting embedding theorem for these spaces fails to hold in general.
\end{abstract}

\maketitle


\section{Introduction} \label{sec:introduction}


In a celebrated work \cite{BBM01}, Bourgain, Brezis and Mironescu study the asymptotic behavior of the fractional Sobolev seminorms when the order of differentiability approaches one. Their results are concerned with smooth bounded domains, but the same arguments work for $W^{1, p}$-extension domain. More precisely, if $\Omega \subset \mathbb{R}^{n}$ is a $W^{1, p}$-extension domain, then an equality
\begin{equation} \label{eq:BBM}
\lim_{s \nearrow 1} s(1-s) \int_{\Omega} \int_{\Omega} \frac{|u(x)-u(y)|^{p}}{|x-y|^{n+sp}} \,\mathrm{d}y \,\mathrm{d}x = K_{n, p} \int_{\Omega} |\nabla u(x)|^{p} \,\mathrm{d}x
\end{equation}
holds for any $u \in W^{1, p}(\Omega)$, where $n \in \mathbb{N}$, $p \in (1, +\infty)$ and
\begin{equation} \label{eq:BBM-const}
K_{n, p} = \frac{1}{p} \int_{\mathbb{S}^{n-1}} |\omega_{n}|^{p} \,\mathrm{d}\mathcal{H}^{n-1}(\omega) = 2\pi^{\frac{n-1}{2}} \frac{\Gamma(\frac{p+1}{2})}{\Gamma(\frac{n+p}{2})} \frac{1}{p}.
\end{equation}
The equality \eqref{eq:BBM} suggests a nonlocal approximation of the Sobolev norm and yields a limiting embedding $W^{s, p}(\Omega) \to W^{1, p}(\Omega)$ as $s \nearrow 1$. Note that $\Omega$ is allowed to be unbounded. For the second equality in \eqref{eq:BBM-const}, see, for instance, \cite[Lemma 2.1]{IN10} or \cite[Proposition 3.9]{FG20}.

In this paper, we consider the Sobolev and fractional Sobolev spaces with variable exponents. The Sobolev spaces with variable exponents $W^{1, p(\cdot)}$ appear naturally from variational problems \cite{AM01,Alk97,CPC97,FZ99,Zhi97}, modeling of the electrorheological fluids \cite{RR96,RR01,Ruz00}, thermorheological fluids \cite{AR06} and image processing \cite{AMS08,BCE+09,CLR06,LLP10}. We refer the reader to the books \cite{CUF13,DHHR11} for a comprehensive study. The fractional Sobolev spaces with variable exponents $W^{s, p(\cdot, \cdot)}$ are introduced recently in \cite{KRV17}, and studied extensively in various contexts: \cite{ABS19,ABS20,BB19,Bah18,BR18,BRW20,BT21,CK21,DPR17,HKL21,HKS19,OK21}.

This paper aims to answer a question whether a Bourgain--Brezis--Mironescu-type result is true for the fractional Sobolev spaces with variable exponents. On the one hand, the answer is affirmative in the sense that an equality similar to \eqref{eq:BBM}, which takes the variable exponents into account, holds for functions that are sufficiently regular. However, on the other hand, the answer is negative in the sense that this equality fails to holds in general for functions in Lebesgue spaces---even in Sobolev spaces---with variable exponents. It implies that a limiting embedding for the fractional Sobolev spaces with variable exponents, corresponding to $W^{s, p} \to W^{1, p}$, is not true in general.

Let $\Omega \subset \mathbb{R}^{n}$ be any open set (not necessarily bounded). Let $p: \Omega \times \Omega \to \mathbb{R}$ be a measurable function satisfying
\begin{equation} \label{eq:p-bound}
1 < p_{-} := \essinf_{x, y \in \Omega} p(x, y) \leq \esssup_{x, y \in \Omega} p(x, y) =: p_{+} < +\infty,
\end{equation}
and define $\bar{p}(x) := p(x, x)$. Let $L^{\bar{p}(\cdot)}(\Omega)$, $W^{1, \bar{p}(\cdot)}(\Omega)$ and $W^{s, p(\cdot, \cdot)}(\Omega)$ denote the Lebesgue, Sobolev and fractional Sobolev spaces with variable exponents, respectively. See \Cref{sec:preliminaries} for the precise definitions. We begin with an extension of \eqref{eq:BBM} to an equality which takes a variable exponent $p$ into account.

\begin{theorem} \label{thm:conv}
Assume that $p$ satisfies \eqref{eq:p-bound} and
\begin{equation} \label{eq:p-log-Holder}
r^{-\osc_{B_{r}(x)} p(x, \cdot)} \leq L \quad \text{for all } x \in \Omega \text{ and } 0 < r < \min \lbrace \mathrm{dist}(x, \partial \Omega), 1 \rbrace
\end{equation}
for some constant $L \geq 1$. Then for any $u \in C^{2}_{c}(\mathbb{R}^{n})$
\begin{equation} \label{eq:equality}
\lim_{s \nearrow 1} s(1-s) \int_{\Omega} \int_{\Omega} \frac{|u(x)-u(y)|^{p(x, y)}}{|x-y|^{n+sp(x, y)}} \,\mathrm{d}y \,\mathrm{d}x = \int_{\Omega} K_{n, \bar{p}(x)} |\nabla u(x)|^{\bar{p}(x)} \,\mathrm{d}x,
\end{equation}
where
\begin{equation*}
K_{n, \bar{p}(x)} = \frac{1}{\bar{p}(x)} \int_{\mathbb{S}^{n-1}} |\omega_{n}|^{p(x)} \,\mathrm{d}\mathcal{H}^{n-1}(\omega) = 2\pi^{\frac{n-1}{2}} \frac{\Gamma(\frac{\bar{p}(x)+1}{2})}{\Gamma(\frac{n+\bar{p}(x)}{2})} \frac{1}{\bar{p}(x)}.
\end{equation*}
\end{theorem}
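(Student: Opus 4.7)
The plan is to establish the pointwise convergence
\[
I_s(x) := s(1-s)\int_\Omega \frac{|u(x)-u(y)|^{p(x,y)}}{|x-y|^{n+sp(x,y)}}\,\mathrm{d}y \longrightarrow K_{n,\bar p(x)}|\nabla u(x)|^{\bar p(x)}
\]
for every $x\in\Omega$, and then to deduce \eqref{eq:equality} by dominated convergence. Fix $x\in\Omega$ and $\delta\in(0,\min(\mathrm{dist}(x,\partial\Omega),1))$, and split $\int_\Omega=\int_{B_\delta(x)}+\int_{\Omega\setminus B_\delta(x)}$. Since $u$ has compact support and $p(x,y)\ge p_->1$, the outer piece is bounded by a constant independent of $s$ close to $1$, so the prefactor $s(1-s)$ annihilates it in the limit.

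The local piece is the heart of the argument. In polar coordinates $y=x+r\omega$ with $r\in(0,\delta)$ and $\omega\in\mathbb{S}^{n-1}$, the $C^2_c$ regularity of $u$ gives
\[
|u(x+r\omega)-u(x)|^{p(x,y)} = r^{p(x,y)}|\nabla u(x)\cdot\omega|^{p(x,y)} + E(x,r,\omega),\quad |E|\lesssim r^{p(x,y)+1},
\]
where the implicit constant depends only on $p_\pm$, $\|\nabla u\|_\infty$ and $\|D^2 u\|_\infty$. The contribution of $E$ to $I_s(x)$ is at most $s(1-s)$ times $\int_0^\delta r^{(1-s)p(x,y)}\,\mathrm{d}r$, which is $O(s(1-s))=o(1)$. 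After discarding this error it remains to evaluate the limit of
\[
s(1-s)\int_{\mathbb{S}^{n-1}}\int_0^\delta r^{(1-s)p(x,x+r\omega)-1}|\nabla u(x)\cdot\omega|^{p(x,x+r\omega)}\,\mathrm{d}r\,\mathrm{d}\mathcal{H}^{n-1}(\omega).
\]

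The main obstacle is to replace the varying exponent $p(x,x+r\omega)$ by $\bar p(x)$ inside both the power of $r$ and the power of $|\nabla u(x)\cdot\omega|$. Here assumption \eqref{eq:p-log-Holder} is decisive: combined with $|p(x,x+r\omega)-\bar p(x)|\le \osc_{B_r(x)}p(x,\cdot)$, it forces
\[
r^{(1-s)(p(x,x+r\omega)-\bar p(x))}\in[L^{-(1-s)},L^{1-s}]\longrightarrow 1\quad\text{as } s\nearrow 1,
\]
uniformly in $r\in(0,\delta)$ and $\omega\in\mathbb{S}^{n-1}$. Continuity of $t\mapsto|a|^t$ applied to the bounded quantity $a=\nabla u(x)\cdot\omega$ handles the analogous replacement in the base. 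After this substitution the radial integral reduces to $\int_0^\delta r^{(1-s)\bar p(x)-1}\,\mathrm{d}r=\delta^{(1-s)\bar p(x)}/((1-s)\bar p(x))$; multiplying by $s(1-s)$ and letting $s\nearrow 1$ produces the factor $\bar p(x)^{-1}$, and rotational invariance of the measure on $\mathbb{S}^{n-1}$ identifies the limit as $K_{n,\bar p(x)}|\nabla u(x)|^{\bar p(x)}$. The same estimates yield an $s$-uniform integrable majorant for $I_s$ on $\Omega$, finite on a neighborhood of $\mathrm{supp}\,u$ and rapidly decaying away from it, so dominated convergence upgrades the pointwise limit to \eqref{eq:equality}.
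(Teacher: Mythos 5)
Your proposal is correct and follows essentially the same strategy as the paper: establish the pointwise limit of $I_s(x)$ by a Taylor expansion, use the log-H\"older assumption to control $r^{(1-s)(p(x,y)-\bar p(x))}$ uniformly, and then invoke dominated convergence via the same two-regime majorant (bounded near $\mathrm{supp}\,u$, decaying like $|x|^{-n-s_0p_-}$ at infinity). The paper organizes this into a liminf lemma, a limsup lemma, a domination lemma and a pointwise-limit lemma, whereas you compress everything into a single pointwise statement plus domination, which is a cleaner exposition of the same argument.

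One minor difference worth noting: to control the error in $|u(x+r\omega)-u(x)|^{p(x,y)}$ the paper uses the algebraic inequality $(a+b)^q\le (1+\varepsilon)a^q+C(q,\varepsilon)b^q$ together with a cutoff $|\xi_n|\ge\theta_0$, and then sends $\varepsilon,\theta_0\to 0$; you instead use the mean value theorem for $t\mapsto|t|^p$ to get a two-sided bound $|E|\lesssim r^{p(x,y)+1}$ directly. Both are fine. Your bound is a little cruder than the paper's $O(r^{2p(x,y)})$, but the extra exponent $+1$ is all one needs for the error to vanish under $s(1-s)$. Two points you should make explicit if you write this out in full: (i) the uniform estimate $\bigl||a|^{p}-|a|^{\bar p(x)}\bigr|\lesssim |p-\bar p(x)|$ for $a\in[0,\|\nabla u\|_\infty]$ does hold (because $\sup_{a\in(0,1]}a^{p_-/2}|\log a|<\infty$), but it is not a simple consequence of pointwise continuity of $t\mapsto|a|^t$; and (ii) the replacement of the exponent in the base requires shrinking $\delta$, so the argument is an $\varepsilon$-argument: since the limiting value of the main term is $\delta$-independent, one first picks $\delta$ small to make the base-replacement error below $\varepsilon$, then sends $s\nearrow 1$.
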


The condition \eqref{eq:p-log-Holder} says that for a fixed point $x \in \Omega$ the function $p(x, \cdot)$ is log-H\"older continuous at $x$. We emphasize that it does not impose any regularity on $x$-variable.

\Cref{thm:conv} is not a full extension of \eqref{eq:BBM} because \eqref{eq:equality} holds for functions that are sufficiently regular. As a natural extension of Bourgain--Brezis--Mironescu theorem, one may expect that the equality \eqref{eq:equality} holds for all functions $u \in W^{1, \bar{p}(\cdot)}(\Omega)$. However, this is not true in general even when the variable exponent $p$ is smooth, which is in sharp contrast to the case when $p$ is constant.

\begin{theorem} \label{thm:failure}
There exists a smooth variable exponent $p(\cdot, \cdot)$ such that
\begin{equation*}
W^{1, \bar{p}(\cdot)}(\Omega) \not\subset W^{s, p(\cdot, \cdot)}(\Omega)
\end{equation*}
for all $s \in (0,1)$.
\end{theorem}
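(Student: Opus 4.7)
The plan is to construct the counterexample explicitly, exploiting the flexibility that $p(x, y)$ can be strictly larger than $\bar{p}(x) = p(x, x)$ when $x$ and $y$ are far apart: membership in $W^{1, \bar{p}(\cdot)}$ is controlled by the diagonal values of $p$, whereas the Gagliardo modular for $W^{s, p(\cdot, \cdot)}$ involves the off-diagonal values.

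Concretely, I will take $\Omega = B_{1} \subset \mathbb{R}^{n}$ with $n \geq 3$ and constant $\bar{p}(\cdot) \equiv p_{0} \in (1, n)$, so that $W^{1, \bar{p}(\cdot)}(\Omega)$ is the classical $W^{1, p_{0}}(B_{1})$. Fix $q > p_{0}^{*} := n p_{0}/(n - p_{0})$ and let $\psi \in C^{\infty}(\mathbb{R}^{n}; [0, 1])$ be a radial cutoff with $\psi(z) = 0$ for $|z| \leq 1/4$ and $\psi(z) = 1$ for $|z| \geq 1/2$. Define
\[
p(x, y) := p_{0} + (q - p_{0})\, \psi(x - y).
\]
Since $\psi$ vanishes on a neighborhood of $0 \in \mathbb{R}^{n}$, this $p$ is $C^{\infty}$ on $\Omega \times \Omega$, satisfies \eqref{eq:p-bound} with $p_{-} = p_{0}$ and $p_{+} = q$, and reduces to $\bar{p} \equiv p_{0}$ on the diagonal. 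The test function will be $u(x) = |x|^{-\alpha}$ with $\alpha$ in the nonempty interval $(n/q,\, n/p_{0} - 1)$; standard computations show $u \in W^{1, p_{0}}(B_{1}) = W^{1, \bar{p}(\cdot)}(\Omega)$ while $u \notin L^{q}$ near the origin (precisely because $\alpha q > n$).

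The core step is to show that the Gagliardo modular of $u$ is $+\infty$ for every $s \in (0, 1)$. I restrict the double integral to the region $R := B_{r_{0}}(0) \times (B_{1} \setminus B_{3/4}(0))$ with $r_{0}$ small. On $R$ we have $|x - y| \geq 3/4 - r_{0} > 1/2$, so $p(x, y) \equiv q$, and $|x - y| \leq 2$, so the kernel $|x - y|^{-(n + sq)}$ is bounded below by the positive constant $2^{-(n + sq)}$. Since $|u(y)|$ is uniformly bounded on $B_{1} \setminus B_{3/4}$ while $|u(x)| = |x|^{-\alpha} \to \infty$ as $x \to 0$, for $r_{0}$ small enough we get $|u(x) - u(y)| \geq \frac{1}{2}|x|^{-\alpha}$ on $R$. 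The restricted integral is therefore at least a positive constant times $\int_{B_{r_{0}}} |x|^{-\alpha q}\,\mathrm{d}x = +\infty$, with the divergence caused solely by the singularity of $u$ at $0$ and thus independent of $s$. Because $p_{+} < \infty$, an infinite modular forces an infinite Luxemburg norm, so $u \notin W^{s, p(\cdot, \cdot)}(\Omega)$ for every $s \in (0, 1)$.

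The main obstacle is the parameter bookkeeping: $q > p_{0}^{*}$ requires $n > p_{0}$ (so $n \geq 3$ when $p_{0} = 2$), and then $\alpha$ must lie in the open interval $(n/q, n/p_{0} - 1)$, which is nonempty exactly because $q > p_{0}^{*}$. A concrete working choice is $n = 3$, $p_{0} = 2$, $q = 7$, $\alpha = 0.45$. All remaining verifications---smoothness of $p$, membership $u \in W^{1, p_{0}}(B_{1})$, and the passage from an infinite modular to an infinite Luxemburg norm---are routine.
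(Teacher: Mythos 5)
Your construction is correct and is essentially the paper's: a variable exponent that equals a fixed $p_0 = \bar p$ on the diagonal but jumps to a large constant $q$ once $|x-y|$ exceeds a threshold, tested against a compactly localized power singularity $|x|^{-\alpha}$ with $\alpha$ chosen so that $u\in W^{1,p_0}$ but $u\notin L^{q}$ near the origin, the off-diagonal region of the double integral then forcing $\varrho_{s,p(\cdot,\cdot)}(u)=+\infty$ for every $s$. The only cosmetic difference is that the paper places $\Omega\supset \overline B_2$ and separates the singular point from $\Omega\setminus B_2$ with a cut-off $u$ of profile $|t|^{1-q}$, whereas you work entirely inside $\Omega=B_1$ and separate $B_{r_0}$ from $B_1\setminus B_{3/4}$; the mechanism and the passage from infinite modular to non-membership (via \Cref{lem:modular1} and $p_+<\infty$) are the same.
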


The following result is an immediate consequence of \Cref{thm:failure}, which shows that a limiting embedding $W^{s, p(\cdot, \cdot)} \to W^{1, \bar{p}(\cdot)}$ as $s \nearrow 1$ is not true in general.

\begin{corollary} \label{cor:failure}
There exists a smooth variable exponent $p$ such that the equality \eqref{eq:equality} does not hold for some $u \in W^{1, \bar{p}(\cdot)}(\Omega)$.
\end{corollary}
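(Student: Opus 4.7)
The plan is to deduce the corollary essentially immediately from \Cref{thm:failure}. First I would invoke that theorem to fix a smooth variable exponent $p(\cdot,\cdot)$ and a function $u \in W^{1, \bar p(\cdot)}(\Omega)$ with $u \notin W^{s, p(\cdot,\cdot)}(\Omega)$ for every $s \in (0,1)$. The goal is then to show that for this $u$ the left-hand side of \eqref{eq:equality} is $+\infty$ while the right-hand side is finite, so that \eqref{eq:equality} cannot possibly hold.

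For the left-hand side, I would appeal to the standard norm--modular equivalence that is available under \eqref{eq:p-bound}. Since $1 < p_- \leq p_+ < +\infty$, the fractional Sobolev space $W^{s, p(\cdot,\cdot)}(\Omega)$ contains precisely those functions $u \in L^{\bar p(\cdot)}(\Omega)$ whose Gagliardo-type modular
\[
\rho_{s,p}(u) := \int_{\Omega} \int_{\Omega} \frac{|u(x) - u(y)|^{p(x,y)}}{|x - y|^{n + s p(x,y)}} \,\mathrm{d}y \,\mathrm{d}x
\]
is finite; this follows from the elementary estimate $\rho_{s,p}(u/\lambda) \leq \lambda^{-p_-} \rho_{s,p}(u)$ valid for $\lambda \geq 1$, which forces $\rho_{s,p}(u/\lambda) \to 0$ as $\lambda \to \infty$ whenever $\rho_{s,p}(u) < \infty$. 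Since $u \in L^{\bar p(\cdot)}(\Omega)$ automatically (from $u \in W^{1, \bar p(\cdot)}(\Omega)$), our choice of $u$ therefore forces $\rho_{s,p}(u) = +\infty$ for every $s \in (0,1)$. Consequently the limit on the left-hand side of \eqref{eq:equality} is $+\infty$.

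For the right-hand side, I would observe that under \eqref{eq:p-bound} the map $x \mapsto K_{n, \bar p(x)}$ is uniformly bounded on $\Omega$ by a constant depending only on $n$, $p_-$, $p_+$, as is immediate from the explicit Gamma-function formula. Combined with $\int_{\Omega} |\nabla u(x)|^{\bar p(x)} \,\mathrm{d}x < +\infty$ built into $u \in W^{1, \bar p(\cdot)}(\Omega)$, this renders the right-hand side of \eqref{eq:equality} finite. Comparing with the previous paragraph, \eqref{eq:equality} fails for this $u$.

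Since the whole argument is essentially a one-line consequence of \Cref{thm:failure} together with standard variable-exponent bookkeeping, there is no serious obstacle; the only step worth flagging is the norm--modular equivalence, which is classical under \eqref{eq:p-bound} and can be cited from \cite{CUF13,DHHR11} if a direct proof is deemed undesirable.
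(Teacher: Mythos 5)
Your proof is correct and follows exactly the route the paper intends: the paper simply asserts the corollary is an immediate consequence of \Cref{thm:failure}, and you have supplied the obvious missing details (the modular of the counterexample $u$ is $+\infty$ for every $s$ by norm--modular equivalence under \eqref{eq:p-bound}, while the right-hand side of \eqref{eq:equality} is finite because $K_{n,\bar p(\cdot)}$ is bounded and $u \in W^{1,\bar p(\cdot)}(\Omega)$). No gaps; this is the standard deduction.
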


\Cref{cor:failure} tells us that the target space $W^{1, \bar{p}(\cdot)}$ is too large for \eqref{eq:equality} to be true in general. The failure of $W^{1, \bar{p}(\cdot)}$ can be explained by the following observation: let us forget about $p(\cdot, \cdot)$ and suppose that a variable exponent $\bar{p}(\cdot)$ is given. What is the natural fractional Sobolev space as an interpolation of $L^{\bar{p}(\cdot)}$ and $W^{1, \bar{p}(\cdot)}$? Since there are too many choices of $p(\cdot, \cdot)$ satisfying \eqref{eq:p-bound} and $p(x, x) = \bar{p}(x)$, there is no way to find the natural interpolation space. This is because the spaces $L^{\bar{p}(\cdot)}$ and $W^{1, \bar{p}(\cdot)}$ do not contain any information about $p(\cdot, \cdot)$.

The following result nevertheless shows that some Sobolev functions, in a smaller space than $W^{1, \bar{p}(\cdot)}$, satisfy the equality \eqref{eq:equality}.

\begin{corollary} \label{cor:conv}
Let $\Omega$ be a $W^{1, p_{+}}$- and $W^{1, p_{-}}$-extension domain. Assume that $p$ satisfies \eqref{eq:p-bound} and \eqref{eq:p-log-Holder}. Then the equality \eqref{eq:equality} holds for all $u \in W^{1, p_{+}}(\Omega) \cap W^{1, p_{-}}(\Omega)$.
\end{corollary}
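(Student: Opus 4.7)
Since \Cref{thm:conv} establishes \eqref{eq:equality} for $u \in C^{2}_{c}(\mathbb{R}^{n})$, the plan is a density argument. Using that $\Omega$ is both a $W^{1,p_{+}}$- and a $W^{1,p_{-}}$-extension domain, I will approximate $u \in W^{1,p_{+}}(\Omega) \cap W^{1,p_{-}}(\Omega)$ by a sequence $u_{k} \in C^{2}_{c}(\mathbb{R}^{n})$ with $u_{k} \to u$ simultaneously in $W^{1,p_{+}}(\Omega)$ and $W^{1,p_{-}}(\Omega)$, obtained by extending $u$ to $\mathbb{R}^{n}$, mollifying, and truncating. Throughout, write
\[
F_{s}(w) := s(1-s) \int_{\Omega}\int_{\Omega} \frac{|w(x)-w(y)|^{p(x,y)}}{|x-y|^{n+sp(x,y)}} \, \mathrm{d}y \, \mathrm{d}x,
\]
and let $H_{s}^{q}(w)$ denote the same expression with $p(x,y)$ replaced by a constant $q \in \{p_{-}, p_{+}\}$.

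The workhorse pointwise inequality is $t^{q} \leq t^{p_{-}} + t^{p_{+}}$, valid for all $t \geq 0$ and $q \in [p_{-},p_{+}]$. Applied with $t = |w(x)-w(y)|/|x-y|^{s}$ and $q = p(x,y)$, it yields $F_{s}(w) \leq H_{s}^{p_{-}}(w) + H_{s}^{p_{+}}(w)$. Since $\Omega$ is an extension domain for both $p_{-}$ and $p_{+}$, the classical Bourgain--Brezis--Mironescu formula \eqref{eq:BBM} applies to $H_{s}^{p_{\pm}}$ and gives
\[
\limsup_{s \nearrow 1} F_{s}(u - u_{k}) \leq K_{n,p_{-}} \|\nabla(u-u_{k})\|_{L^{p_{-}}(\Omega)}^{p_{-}} + K_{n,p_{+}} \|\nabla(u-u_{k})\|_{L^{p_{+}}(\Omega)}^{p_{+}},
\]
which tends to $0$ as $k \to \infty$ by the choice of the approximating sequence.

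To transfer from $F_{s}(u_{k})$ to $F_{s}(u)$, I will combine the pointwise bound $|a^{p}-b^{p}| \leq p(a^{p-1}+b^{p-1})|a-b|$ (applied with $a = |u(x)-u(y)|$, $b = |u_{k}(x)-u_{k}(y)|$, using the reverse triangle inequality to estimate $|a-b|$ by $|(u-u_{k})(x)-(u-u_{k})(y)|$) with Young's inequality $a^{p-1}b \leq \varepsilon a^{p} + C(\varepsilon) b^{p}$, where $C(\varepsilon)$ is uniform for $p \in [p_{-},p_{+}]$ since this range is compact in $(1,\infty)$. Dividing by $|x-y|^{n+sp(x,y)}$ and integrating produces the almost-triangle bound
\[
|F_{s}(u) - F_{s}(u_{k})| \leq \varepsilon \bigl(F_{s}(u) + F_{s}(u_{k})\bigr) + C(\varepsilon) F_{s}(u-u_{k}), \quad \varepsilon \in (0,1),
\]
which rearranges to sandwich $F_{s}(u)$ between affine expressions in $F_{s}(u_{k})$ and $F_{s}(u-u_{k})$. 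Invoking \Cref{thm:conv} for each $u_{k}$, using that $\int_{\Omega} K_{n,\bar{p}(x)} |\nabla u_{k}(x)|^{\bar{p}(x)} \, \mathrm{d}x \to \int_{\Omega} K_{n,\bar{p}(x)} |\nabla u(x)|^{\bar{p}(x)} \, \mathrm{d}x$ by dominated convergence (with majorant $|\nabla u|^{\bar{p}} \leq |\nabla u|^{p_{-}} + |\nabla u|^{p_{+}}$), and then letting $s \nearrow 1$, $k \to \infty$, and $\varepsilon \to 0$ in that order, will yield \eqref{eq:equality} via matching $\limsup$ and $\liminf$.

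The main obstacle I expect is absorbing the $F_{s}(u)$ term on the right of the almost-triangle bound; this forces the strict inequality $\varepsilon < 1$ and dictates the specific ordering of the three limits. Once the two pointwise estimates above are in hand, the rest is routine bookkeeping. A minor side point is producing the joint $W^{1,p_{+}} \cap W^{1,p_{-}}$ smooth approximation, which is standard for the usual extension domains (Lipschitz, $(\varepsilon,\delta)$-domains) since a single Stein-type extension operator works for both exponents.
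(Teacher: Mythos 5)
Your proposal is correct and follows the same overall strategy as the paper: extend $u$ to $\mathbb{R}^n$, approximate by $u_k \in C^2_c(\mathbb{R}^n)$ converging simultaneously in $W^{1,p_+}$ and $W^{1,p_-}$, then pass to the limit. The difference is in how the modular-continuity step is handled. The paper splits $|\varrho_{s,p(\cdot,\cdot)}(u)-\tilde\varrho(|\nabla u|)|$ into three terms by the triangle inequality and disposes of $|\varrho_{s,p(\cdot,\cdot)}(u)-\varrho_{s,p(\cdot,\cdot)}(u_k)|$ by citing \Cref{lem:modular2} together with the chain of inclusions $W^{1,p_+}\cap W^{1,p_-}\subset W^{s,p(\cdot,\cdot)}$; you instead derive the needed almost-triangle bound from scratch, combining $t^{q}\le t^{p_-}+t^{p_+}$, the mean-value-theorem estimate $|a^p-b^p|\le p(a^{p-1}+b^{p-1})|a-b|$, and Young's inequality, which gives $|F_s(u)-F_s(u_k)|\le \varepsilon(F_s(u)+F_s(u_k))+C(\varepsilon)F_s(u-u_k)$ and then requires the careful absorption and ordering of the limits $s\nearrow 1$, $k\to\infty$, $\varepsilon\to 0$. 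Your version has the merit of making explicit the uniformity in $s$ that the paper's argument relies on only implicitly (a fixed $k$ must make $I_1$ small for all $s$ near $1$ simultaneously; this follows from the $s(1-s)$ normalization and the constant-exponent BBM bound on $F_s(u-u_k)$, exactly as you argue), at the cost of being longer than the paper's appeal to the abstract modular lemma. Both are valid; yours is the more self-contained and more pedantic route.
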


It is worth mentioning the results in \cite{CFS21,FS22}, where the authors provide nonlocal approximations to the Sobolev norms with variable exponents $p(x)$. Their approximations involve a suitable class of nonlocal nonconvex energy functionals, generalizing the constant case \cite{BN20,Ngu06,Ngu08}, and their proofs make use of the Hardy--Littlewood maximal function with variable exponent $p(x)$. Since the maximal inequality with variable exponents in a modular form fails in general \cite{Ler05}, these approximations require an additional assumption $u \in W^{1, p_{+}} \cap W^{1, p_{-}}$ as well.

Let us also mention other related results in the literature. The Bourgain--Brezis--Mironescu theorem is extended to fractional Sobolev norms with L\'evy-type measures \cite{FG20,FGKV20}, Orlicz growth \cite{ACPS20a,FBS19} and anisotropic structure \cite{CDFB22}. The asymptotic behavior of the fractional Sobolev seminorms as $s \searrow 0$ is first established in \cite{MS02}, and extended those with Orlicz growth \cite{ACPS20b} and anisotropic structure \cite{CDFB22}.

The paper is organized as follows. In \Cref{sec:preliminaries}, we recall definitions of the Lebesgue, Sobolev and fractional Sobolev spaces with variable exponents. The asymptotic behavior of the fractional Sobolev seminorms with variable exponents (in a modular form), for regular functions, is proved in \Cref{sec:limit}. \Cref{sec:failure} is devoted to the proof of \Cref{thm:failure} and \Cref{cor:failure}.


\section{Preliminaries} \label{sec:preliminaries}


Let us briefly recall definitions of the Lebesgue, Sobolev and fractional Sobolev spaces with variable exponents. Let $\Omega \subset \mathbb{R}^{n}$ be an open set and let $q: \Omega \to \mathbb{R}$ be a measurable function satisfying
\begin{equation*}
1 < \essinf_{x \in \Omega} q(x) \leq \esssup_{x \in \Omega} q(x) < +\infty.
\end{equation*}
The space
\begin{equation*}
L^{q(\cdot)}(\Omega) = \left\lbrace u: \Omega \to \mathbb{R} \text{ measurable}: \varrho_{q(\cdot)}(u/\lambda) < +\infty \text{ for some } \lambda > 0 \right\rbrace
\end{equation*}
equipped with the norm 
\begin{equation*}
\|u\|_{q(\cdot)} := \|u\|_{L^{q(\cdot)}(\Omega)} := \inf \left\lbrace \lambda > 0: \varrho_{q(\cdot)}(u/\lambda) \leq 1 \right\rbrace
\end{equation*}
is called the {\it variable exponent Lebesgue space}, where
\begin{equation*}
\varrho_{q(\cdot)}(u) := \varrho_{L^{q(\cdot)}(\Omega)}(u) := \int_{\Omega} |u(x)|^{q(x)}\,\mathrm{d}x
\end{equation*}
is a modular. The {\it Sobolev space with variable exponent} is defined by
\begin{equation*}
W^{1, q(\cdot)}(\Omega) = \left\lbrace u \in L^{q(\cdot)}(\Omega): D_{i}u \in L^{q(\cdot)}(\Omega) \text{ for all } i = 1, \dots, n \right\rbrace
\end{equation*}
with the norm
\begin{equation*}
\|u\|_{1, q(\cdot)} := \|u\|_{W^{1, q(\cdot)}(\Omega)} := \|u\|_{q(\cdot)} + \sum_{1 \leq i \leq n} \|D_{i} u\|_{q(\cdot)}.
\end{equation*}
It is well known that $L^{q(\cdot)}(\Omega)$ and $W^{1, q(\cdot)}(\Omega)$ are Banach spaces, see \cite{CUF13,DHHR11,FZ01,KR91} for instance.

The fractional Sobolev spaces with variable exponents are first introduced in \cite{KRV17} and studied in various contexts. Let $p: \Omega \times \Omega \to \mathbb{R}$ be a measurable function satisfying \eqref{eq:p-bound} and define $\bar{p}(x) = p(x, x)$. For $s \in (0,1)$, the {\it fractional Sobolev space with variable exponent} is defined by
\begin{equation*}
W^{s, p(\cdot, \cdot)}(\Omega) = \left\lbrace u \in L^{\bar{p}(\cdot)}(\Omega): \varrho_{s, p(\cdot, \cdot)}(u/\lambda) < + \infty \text{ for some } \lambda > 0 \right\rbrace
\end{equation*}
with the norm
\begin{equation*}
\|u\|_{s, p(\cdot, \cdot)} := \|u\|_{W^{s, p(\cdot, \cdot)}(\Omega)} := \|u\|_{p(\cdot)} + [u]_{s, p(\cdot, \cdot)},
\end{equation*}
where
\begin{equation*}
[u]_{s, p(\cdot, \cdot)} := [u]_{W^{s,p(\cdot,\cdot)}(\Omega)} := \inf \left\lbrace \lambda > 0: \varrho_{s,p(\cdot,\cdot)}(u/\lambda) \leq 1 \right\rbrace
\end{equation*}
is a seminorm and
\begin{equation*}
\varrho_{s, p(\cdot, \cdot)}(u) := \varrho_{W^{s,p(\cdot,\cdot)}(\Omega)}(u) := s(1-s) \int_{\Omega} \int_{\Omega} \frac{|u(x)-u(y)|^{p(x,y)}}{|x-y|^{n+sp(x,y)}} \,\mathrm{d}y\,\mathrm{d}x.
\end{equation*}
It is easy to check that $W^{s, p(\cdot, \cdot)}(\Omega)$ is a Banach space by following the arguments in \cite[Proposition 4.24]{DD12}. Obviously, we have $W^{1, p_{+}}(\Omega) \cap W^{1, p_{-}}(\Omega) \subset W^{s, p_{+}}(\Omega) \cap W^{s, p_{-}}(\Omega) \subset W^{s, p(\cdot, \cdot)}(\Omega)$. Let us collect some useful properties for the the fractional Sobolev spaces with variables exponents. They follow immediately from the definition.

\begin{lemma} \label{lem:modular1}
Let $u \in W^{s, p(\cdot, \cdot)}(\Omega)$, then
\begin{equation*}
\min\left\lbrace [u]_{s, p(\cdot, \cdot)}^{p_{+}}, [u]_{s, p(\cdot, \cdot)}^{p_{-}} \right\rbrace \leq \varrho_{s, p(\cdot, \cdot)}(u) \leq \max\left\lbrace [u]_{s, p(\cdot, \cdot)}^{p_{+}}, [u]_{s, p(\cdot, \cdot)}^{p_{-}} \right\rbrace.
\end{equation*}
\end{lemma}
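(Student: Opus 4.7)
The plan is to derive the claim from two standard ingredients: (i) a scaling inequality for the modular, and (ii) the unit-sphere property $\varrho_{s, p(\cdot,\cdot)}(u / [u]_{s,p(\cdot,\cdot)}) = 1$.

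First I would establish the scaling inequality. Writing $\varrho(v) := \varrho_{s,p(\cdot,\cdot)}(v)$ for brevity, the pointwise bound $\min\{t^{p_{-}}, t^{p_{+}}\} \leq t^{p(x,y)} \leq \max\{t^{p_{-}}, t^{p_{+}}\}$ (valid for any $t > 0$ by splitting into the cases $t \geq 1$ and $t \leq 1$) applied under the double integral defining $\varrho(tu)$ yields
\begin{equation*}
\min\{t^{p_{-}}, t^{p_{+}}\}\, \varrho(u) \leq \varrho(tu) \leq \max\{t^{p_{-}}, t^{p_{+}}\}\, \varrho(u).
\end{equation*}

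Next I would verify that $\varrho(u/\lambda_{0}) = 1$ for $\lambda_{0} := [u]_{s,p(\cdot,\cdot)}$. By definition of the infimum, for every $\lambda > \lambda_{0}$ we have $\varrho(u/\lambda) \leq 1$, while for $\lambda < \lambda_{0}$ the inequality reverses (otherwise $\lambda$ would be admissible in the infimum). Since the integrand in $\varrho(u/\lambda)$ is, for fixed $(x,y)$, a continuous function of $\lambda$ bounded uniformly on compact subintervals of $(0,\infty)$ by an $L^{1}$ envelope coming from $u \in W^{s,p(\cdot,\cdot)}(\Omega)$, dominated convergence gives continuity of $\lambda \mapsto \varrho(u/\lambda)$. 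Passing $\lambda \searrow \lambda_{0}$ and $\lambda \nearrow \lambda_{0}$ then sandwiches $\varrho(u/\lambda_{0})$ between $\leq 1$ and $\geq 1$, forcing equality.

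Finally, I would apply the scaling inequality with $t = \lambda_{0}$ to $\varrho(\lambda_{0} \cdot u/\lambda_{0}) = \varrho(u)$, using $\varrho(u/\lambda_{0}) = 1$, to obtain
\begin{equation*}
\min\{\lambda_{0}^{p_{-}}, \lambda_{0}^{p_{+}}\} \leq \varrho(u) \leq \max\{\lambda_{0}^{p_{-}}, \lambda_{0}^{p_{+}}\},
\end{equation*}
which is exactly the claim. The only mildly delicate point is the justification of $\varrho(u/\lambda_{0}) = 1$; the scaling step and the final substitution are purely algebraic, and no obstacle is expected beyond invoking dominated convergence correctly.
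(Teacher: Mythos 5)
Your proof is correct and follows the standard route for Luxemburg-type seminorms (the paper itself gives no proof, remarking only that the lemma "follows immediately from the definition"). The two ingredients you use---the scaling inequality $\min\{t^{p_-},t^{p_+}\}\varrho(u)\leq \varrho(tu)\leq \max\{t^{p_-},t^{p_+}\}\varrho(u)$ and the identity $\varrho(u/[u]_{s,p(\cdot,\cdot)})=1$---are exactly the standard ones, and the final algebraic substitution is sound. Two small points deserve explicit mention. First, the degenerate case $[u]_{s,p(\cdot,\cdot)}=0$ should be handled separately: for $\lambda\in(0,1)$ the scaling inequality gives $\lambda^{-p_-}\varrho(u)\leq \varrho(u/\lambda)\leq 1$, so letting $\lambda\searrow 0$ forces $\varrho(u)=0$ and the claim holds trivially. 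Second, to invoke dominated convergence for the continuity of $\lambda\mapsto\varrho(u/\lambda)$ you should say explicitly that $\varrho(u/\lambda)<\infty$ for \emph{every} $\lambda>0$; this follows at once from the scaling inequality together with the fact that $u\in W^{s,p(\cdot,\cdot)}(\Omega)$ guarantees finiteness for at least one $\lambda$, and then the integrand at any $\lambda$ in a compact subinterval $[a,b]\subset(0,\infty)$ is dominated by the (integrable) one at $\lambda=a$. Neither point is a genuine gap, and the proof as a whole is the natural one.
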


\begin{lemma} \label{lem:modular2}
Let $u, u_{k} \in W^{s, p(\cdot, \cdot)}(\Omega)$ for $k \in \mathbb{N}$. Then the following are equivalent:
\begin{enumerate}[(i)]
\item
$[u_{k}-u]_{s, p(\cdot, \cdot)} \to 0$ as $k \to \infty$,
\item
$\varrho_{s, p(\cdot, \cdot)}(u_{k}-u) \to 0$ as $k \to \infty$.
\end{enumerate}
In particular, if one of the assertions is satisfied, then $\varrho_{s, p(\cdot, \cdot)}(u_{k}) \to \varrho_{s, p(\cdot, \cdot)}(u)$ as $k \to \infty$.
\end{lemma}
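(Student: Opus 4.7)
The equivalence (i) $\Leftrightarrow$ (ii) is a direct corollary of \Cref{lem:modular1} applied to the sequence $v_{k} := u_{k} - u$. I would argue as follows. If $[v_{k}]_{s,p(\cdot,\cdot)} \to 0$, then eventually $[v_{k}]_{s,p(\cdot,\cdot)} \leq 1$, so the right-hand bound in \Cref{lem:modular1} becomes $[v_{k}]_{s,p(\cdot,\cdot)}^{p_{-}}$, which tends to $0$. Conversely, suppose $\varrho_{s,p(\cdot,\cdot)}(v_{k}) \to 0$; eventually $\varrho_{s,p(\cdot,\cdot)}(v_{k}) < 1$, and the left-hand bound in \Cref{lem:modular1} forces $[v_{k}]_{s,p(\cdot,\cdot)} < 1$ (otherwise $\varrho_{s,p(\cdot,\cdot)}(v_{k}) \geq [v_{k}]_{s,p(\cdot,\cdot)}^{p_{-}} \geq 1$). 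Under this regime the right-hand side again gives $[v_{k}]_{s,p(\cdot,\cdot)}^{p_{+}} \leq \varrho_{s,p(\cdot,\cdot)}(v_{k}) \to 0$.

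For the final assertion $\varrho_{s,p(\cdot,\cdot)}(u_{k}) \to \varrho_{s,p(\cdot,\cdot)}(u)$, I would estimate the integrand pointwise. With $a := u_{k}(x)-u_{k}(y)$ and $b := u(x)-u(y)$, the mean value theorem applied to $t \mapsto |t|^{p(x,y)}$ yields
\begin{equation*}
\bigl| |a|^{p(x,y)} - |b|^{p(x,y)} \bigr| \leq p(x,y)\,(|a|+|b|)^{p(x,y)-1}\,|a-b|,
\end{equation*}
and $a-b = v_{k}(x)-v_{k}(y)$. Applying Young's inequality in the form $A^{p-1}B \leq \epsilon A^{p} + C_{\epsilon,p}\,B^{p}$, valid uniformly for $p \in [p_{-},p_{+}]$ and $\epsilon \in (0,1)$, to $A=|a|+|b|$ and $B=|a-b|$, and using the elementary bound $(|a|+|b|)^{p(x,y)} \leq 2^{p_{+}-1}(|a|^{p(x,y)}+|b|^{p(x,y)})$, I integrate against $s(1-s)|x-y|^{-n-sp(x,y)}\,\mathrm{d}y\,\mathrm{d}x$ to obtain
\begin{equation*}
\bigl| \varrho_{s,p(\cdot,\cdot)}(u_{k}) - \varrho_{s,p(\cdot,\cdot)}(u) \bigr| \leq C\,\epsilon \bigl( \varrho_{s,p(\cdot,\cdot)}(u_{k}) + \varrho_{s,p(\cdot,\cdot)}(u) \bigr) + C_{\epsilon}\,\varrho_{s,p(\cdot,\cdot)}(v_{k}),
\end{equation*}
with constants depending only on $p_{-}, p_{+}$.

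To finish, I use the crude bound $\varrho_{s,p(\cdot,\cdot)}(u_{k}) \leq 2^{p_{+}-1}\bigl(\varrho_{s,p(\cdot,\cdot)}(u)+\varrho_{s,p(\cdot,\cdot)}(v_{k})\bigr)$, which follows from $|a|^{p(x,y)} \leq 2^{p(x,y)-1}(|b|^{p(x,y)}+|a-b|^{p(x,y)})$. Passing to $\limsup_{k\to\infty}$ and using $\varrho_{s,p(\cdot,\cdot)}(v_{k}) \to 0$ (from the equivalence just proved), I get $\limsup_{k} |\varrho_{s,p(\cdot,\cdot)}(u_{k}) - \varrho_{s,p(\cdot,\cdot)}(u)| \leq C'\epsilon\, \varrho_{s,p(\cdot,\cdot)}(u)$; sending $\epsilon \searrow 0$ completes the argument.

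\textbf{Main obstacle.} The only subtlety is tracking that the Young constant $C_{\epsilon,p(x,y)}$ stays uniformly bounded as $(x,y)$ varies, which is secured by $1 < p_{-} \leq p_{+} < \infty$; no further regularity of $p$ is needed, and no Hardy--Littlewood maximal inequality is invoked.
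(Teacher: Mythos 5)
Your proposal is correct. The paper gives no explicit proof of this lemma (it states that \Cref{lem:modular1} and \Cref{lem:modular2} ``follow immediately from the definition''), so your argument usefully records the details. For (i)~$\Leftrightarrow$~(ii) your reduction to \Cref{lem:modular1} is exactly the intended route; one wording slip: once $[v_k]_{s,p(\cdot,\cdot)}<1$ is established, the bound $[v_k]_{s,p(\cdot,\cdot)}^{p_+}\leq\varrho_{s,p(\cdot,\cdot)}(v_k)$ is the \emph{left-hand} (min) inequality of \Cref{lem:modular1}, not the right-hand one as written. For the final assertion, the mean value theorem plus weighted Young, with constants controlled uniformly by $1<p_-\leq p_+<\infty$, is a clean and correct way to obtain modular continuity. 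One step worth making explicit: the conclusion $\limsup_k |\varrho_{s,p(\cdot,\cdot)}(u_k)-\varrho_{s,p(\cdot,\cdot)}(u)|\leq C'\varepsilon\,\varrho_{s,p(\cdot,\cdot)}(u)$ only finishes the argument because $\varrho_{s,p(\cdot,\cdot)}(u)<\infty$; this does hold for $u\in W^{s,p(\cdot,\cdot)}(\Omega)$ since $\varrho_{s,p(\cdot,\cdot)}(u/\lambda)<\infty$ for some $\lambda>0$ together with \eqref{eq:p-bound} gives $\varrho_{s,p(\cdot,\cdot)}(u)\leq\max\{\lambda^{p_+},\lambda^{p_-}\}\,\varrho_{s,p(\cdot,\cdot)}(u/\lambda)<\infty$, but it is worth saying so.
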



\section{Limiting behavior of norms for regular functions} \label{sec:limit}


In this section, we study the limiting behavior of fractional Sobolev seminorms  with variable exponents (in a modular form) for sufficiently regular functions. Let us first prove \Cref{thm:conv}, which follows from the following two lemmas: \Cref{lem:liminf} and \Cref{lem:limsup}. Throughout the paper, we denote by $C$ strictly positive constants whose exact values are not important. These constants might change from line to line.

\begin{lemma} \label{lem:liminf}
Let $\Omega \subset \mathbb{R}^{n}$ be open and assume that $p$ satisfies \eqref{eq:p-bound} and \eqref{eq:p-log-Holder}. Then
\begin{equation} \label{eq:liminf}
\int_{\Omega} K_{n, \bar{p}(x)} |\nabla u(x)|^{\bar{p}(x)} \,\mathrm{d}x \leq \liminf_{s \nearrow 1} s(1-s) \int_{\Omega} \int_{\Omega} \frac{|u(x)-u(y)|^{p(x, y)}}{|x-y|^{n+sp(x, y)}} \,\mathrm{d}y \,\mathrm{d}x
\end{equation}
for any $u \in C^{2}(\overline{\Omega})$.
\end{lemma}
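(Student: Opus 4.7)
The plan is to deduce \eqref{eq:liminf} from Fatou's lemma applied to the inner integrals
$$I_s(x) := s(1-s) \int_\Omega \frac{|u(x)-u(y)|^{p(x,y)}}{|x-y|^{n+sp(x,y)}}\,\mathrm{d}y.$$
Since the integrand is nonnegative, it suffices to establish the pointwise inequality
$$\liminf_{s \nearrow 1} I_s(x) \geq K_{n, \bar{p}(x)} |\nabla u(x)|^{\bar{p}(x)} \qquad \text{for every } x \in \Omega,$$
and then feed any sequence $s_k \nearrow 1$ realising the right-hand side of \eqref{eq:liminf} into the classical Fatou lemma. The bound is trivial at points where $\nabla u(x) = 0$, so one can focus on points where $\nabla u(x) \neq 0$.

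For the pointwise inequality, fix $x$ and choose $R < \min(\mathrm{dist}(x, \partial\Omega), 1)$ so that \eqref{eq:p-log-Holder} is available on $B_R(x) \subset \Omega$; restrict the integration in $I_s(x)$ to $y = x + r\omega$ with $r \in (0, r_0)$, $r_0 \leq R$, and $\omega$ ranging over the cone
$$\Sigma_\eta := \{\omega \in \mathbb{S}^{n-1} : |\nabla u(x) \cdot \omega| > 2\eta\}$$
for a small parameter $\eta > 0$. In polar coordinates, the integrand (weighted by $r^{n-1}$) rewrites as
$$\left(\frac{|u(x+r\omega) - u(x)|}{r}\right)^{p(x,x+r\omega)} r^{(1-s) p(x,x+r\omega) - 1}.$$
A first-order Taylor expansion of $u \in C^2$ gives the ratio above as $|\nabla u(x) \cdot \omega| + O(r)$ uniformly in $\omega$, while \eqref{eq:p-log-Holder} yields $\osc_{B_r(x)} p(x, \cdot) \to 0$ as $r \to 0$. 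The latter also produces the crucial multiplicative bound
$$r^{(1-s) p(x, x+r\omega)} \geq L^{-(1-s)}\, r^{(1-s)\bar p(x)}, \qquad r \in (0,R),\ s \in (0,1),$$
obtained by writing $\alpha := p(x, x+r\omega) - \bar p(x)$ with $|\alpha| \leq \osc_{B_r(x)} p(x,\cdot)$ and separately treating $\alpha \geq 0$ (use $r < 1$ and $1-s \leq 1$) and $\alpha < 0$ (trivial since $L \geq 1$).

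Combining these two ingredients, for any $\varepsilon > 0$ one can pick $r_0 = r_0(\eta, \varepsilon) \in (0,R)$ so small that on $\Sigma_\eta \times (0, r_0)$ both the base and the exponent of the power are close enough to $|\nabla u(x) \cdot \omega|$ and $\bar p(x)$ to give
$$\left(\frac{|u(x+r\omega) - u(x)|}{r}\right)^{p(x,x+r\omega)} \geq (1-\varepsilon) |\nabla u(x) \cdot \omega|^{\bar p(x)}.$$
Using $\int_0^{r_0} r^{(1-s)\bar p(x) - 1}\,\mathrm{d}r = r_0^{(1-s)\bar p(x)} / ((1-s)\bar p(x))$ and integrating in $\omega \in \Sigma_\eta$ then yields
$$I_s(x) \geq s\, L^{-(1-s)} (1-\varepsilon)\, \frac{r_0^{(1-s)\bar p(x)}}{\bar p(x)} \int_{\Sigma_\eta} |\nabla u(x) \cdot \omega|^{\bar p(x)}\,\mathrm{d}\mathcal{H}^{n-1}(\omega).$$
Letting $s \nearrow 1$ (so $s$, $L^{-(1-s)}$, $r_0^{(1-s)\bar p(x)}$ all tend to $1$), then $\varepsilon \to 0$, and finally $\eta \to 0$ by monotone convergence on $\mathbb{S}^{n-1}$, one obtains
$$\liminf_{s \nearrow 1} I_s(x) \geq \frac{1}{\bar p(x)} \int_{\mathbb{S}^{n-1}} |\nabla u(x) \cdot \omega|^{\bar p(x)}\,\mathrm{d}\mathcal{H}^{n-1}(\omega) = K_{n, \bar p(x)} |\nabla u(x)|^{\bar p(x)},$$
where the final equality invokes the rotational invariance computation underlying \eqref{eq:BBM-const}.

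The main obstacle is controlling the deviation of $p(x, y)$ from the diagonal value $\bar p(x)$ throughout the integration. Hypothesis \eqref{eq:p-log-Holder} is tailor-made for this: it simultaneously provides the uniform modulus of continuity of $p(x, \cdot)$ at $x$ (which controls the base of the integrand) and the bound $L^{-(1-s)}$ on the multiplicative factor $r^{(1-s)(p(x,y) - \bar p(x))}$ (which controls the exponent of the radial weight). The care lies in arranging the three small parameters $s \nearrow 1$, $r_0 \to 0$, and $\eta \to 0$ in the correct order so that each correction closes.
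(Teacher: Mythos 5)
Your proof is correct, and the argument is close in spirit to the one in the paper but packages two steps differently.

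First, you reduce to a pointwise inequality $\liminf_{s\nearrow 1} I_s(x) \geq K_{n,\bar p(x)}|\nabla u(x)|^{\bar p(x)}$ and then invoke Fatou's lemma along a sequence $s_k\nearrow 1$ realizing the $\liminf$. The paper instead integrates its pointwise lower bound for $I_s(x)$ explicitly over $\Omega\cap B_R$ for a fixed (but arbitrary) $R$, passes to the limit $s\nearrow 1$, and finally lets $R\to\infty$. Both devices are correct; Fatou eliminates the need to juggle the parameter $R$.

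Second, and more substantively, you avoid the elementary inequality $(a+b)^q\leq(1+\varepsilon)a^q+C(q,\varepsilon)b^q$ that the paper uses to transfer the power $p(x,y)$ from the gradient term $|\nabla u(x)\cdot h|$ to the finite difference $|u(x+h)-u(x)|$. Instead, you work directly with the quotient $(|u(x+r\omega)-u(x)|/r)^{p(x,x+r\omega)}$ and observe that on the cone $\Sigma_\eta$ the base is bounded away from $0$ (by $\eta$) and from $\infty$ (by $|\nabla u(x)|+O(r)$), so the simultaneous continuity of the base and the exponent yields the multiplicative bound $\geq(1-\varepsilon)|\nabla u(x)\cdot\omega|^{\bar p(x)}$ uniformly on the cone. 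This is a cleaner route than the paper's additive decomposition, at the cost of requiring the cone restriction before the Taylor comparison (whereas the paper introduces the cone restriction only later, when splitting the spherical integral into $\{|\xi_n|\geq\theta_0\}$ and its complement after rotating $\nabla u(x)$ to $e_n$; your $\Sigma_\eta$ is the same set up to the reparametrization $\theta_0 = 2\eta/|\nabla u(x)|$). The key ingredient from \eqref{eq:p-log-Holder} — the uniform-in-$s$ multiplicative bound $r^{(1-s)p(x,x+r\omega)}\geq L^{-(1-s)}r^{(1-s)\bar p(x)}$ — is used identically in both proofs (compare with the paper's display \eqref{eq:log-Holder}). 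In short: same skeleton, with Fatou replacing the $B_R$-truncation and a multiplicative comparison replacing the algebraic inequality \eqref{eq:alg-ineq}; both swaps are sound.
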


\begin{proof}
Let us fix $\varepsilon > 0$, $\theta_{0} \in (0,1)$, $R > 0$ and $x \in \Omega \cap B_{R}$. Since $u \in C^{2}(\overline{\Omega})$, there exists a constant $C > 0$, depending on $R$ and $\|u\|_{C^{2}(\overline{\Omega})}$, such that
\begin{equation} \label{eq:second-difference}
|u(x+h)-u(x)-\nabla u(x) \cdot h| \leq C |h|^{2}
\end{equation}
for all $|h| < \mathrm{dist}(x, \partial \Omega)$. Since $p(x, \cdot)$ is continuous at $x$, we find $r_{1} \in (0, \mathrm{dist}(x, \partial \Omega))$, depending on $\varepsilon$, $x$, $|\nabla u(x)|$ and $p$, such that
\begin{equation} \label{eq:r1}
(1-\varepsilon) |\nabla u(x)|^{p(x, x)} \leq |\nabla u(x)|^{p(x, y)} \leq (1+\varepsilon) |\nabla u(x)|^{p(x, x)}
\end{equation}
holds for all $y \in B_{r_{1}}(x)$. Moreover, by continuity of $p(x, \cdot)$ at $x$ again, there exists $r_{2} \in (0, \mathrm{dist}(x, \partial \Omega))$, depending on $\varepsilon$, $x$ and $\theta_0$, such that
\begin{equation} \label{eq:r2}
(1-\varepsilon) \theta^{p(x, x)} \leq \theta^{p(x, y)} \leq (1+ \varepsilon) \theta^{p(x, x)}
\end{equation}
holds for all $y \in B_{r_{2}}(x)$ and $\theta \in [\theta_{0}, 1]$. We set $r_{0} = \min\lbrace r_{1}, r_{2}, 1 \rbrace$ and let $r \in (0, r_{0})$.

For $x \in \Omega \cap B_{R}$ and $h \in B_{r}$, we have from \eqref{eq:second-difference}
\begin{equation*}
|\nabla u(x) \cdot h| \leq |u(x+h)-u(x)| + C |h|^2.
\end{equation*}
Using an inequality
\begin{equation} \label{eq:alg-ineq}
(a+b)^{q} \leq (1+\varepsilon)a^{q} + C(q, \varepsilon) b^{q}, \quad a, b \geq 0, q > 1,
\end{equation}
we obtain
\begin{equation*}
|\nabla u(x) \cdot h|^{p(x, x+h)} \leq (1+\varepsilon) |u(x+h)-u(x)|^{p(x, x+h)} + C |h|^{2p(x, x+h)}
\end{equation*}
for some $C = C(R, \|u\|_{C^{2}(\overline{\Omega})}, p_{+}, p_{-}, \varepsilon) > 0$. Multiplying by $|h|^{-n-sp(x, x+h)}$ and integrating over $B_{r}$ with respect to $h$, we have
\begin{equation} \label{eq:liminf-I}
\begin{split}
I
&:= \int_{B_{r}} \frac{|\nabla u(x) \cdot h|^{p(x, x+h)}}{|h|^{n+sp(x,x+h)}} \,\mathrm{d}h \\
&\leq (1+\varepsilon) \int_{B_{r}} \frac{|u(x+h)-u(x)|^{p(x, x+h)}}{|h|^{n+sp(x, x+h)}} \,\mathrm{d}h + C \int_{B_{r}} \frac{|h|^{2p(x, x+h)}}{|h|^{n+sp(x, x+h)}} \,\mathrm{d}h
\end{split}
\end{equation}
for each $x \in \Omega \cap B_{R}$.

Let us estimate $I$ from below. By using the spherical coordinates, we write
\begin{equation*}
I = \int_{0}^{r} \int_{\mathbb{S}^{n-1}} |\nabla u(x) \cdot \omega|^{p(x, x+\rho \omega)} \rho^{-1+(1-s)p(x, x+\rho \omega)} \,\mathrm{d}\mathcal{H}^{n-1}(\omega) \,\mathrm{d}\rho.
\end{equation*}
We use the change of variables $\omega = A\xi$, where $A \in O(n)$ is a rotation satisfying $\nabla u(x) \cdot \omega = |\nabla u(x)| \xi_{n}$, so that we have
\begin{equation} \label{eq:I}
I = \int_{0}^{r} \int_{\mathbb{S}^{n-1}} |\nabla u(x)|^{p(x, x+\rho A\xi)} |\xi_{n}|^{p(x, x+\rho A \xi)} \rho^{-1+(1-s)p(x, x+\rho A\xi)} \,\mathrm{d}\mathcal{H}^{n-1}(\xi) \,\mathrm{d}\rho.
\end{equation}
Using \eqref{eq:r1} and \eqref{eq:r2}, we obtain
\begin{equation*}
I \geq (1-\varepsilon)^2 \left( \int_{0}^{r} \int_{\lbrace |\xi_{n}| \geq \theta_{0} \rbrace} |\xi_{n}|^{\bar{p}(x)} \rho^{-1+(1-s)p(x, x+\rho A\xi)} \,\mathrm{d}\mathcal{H}^{n-1}(\xi) \,\mathrm{d}\rho \right) |\nabla u(x)|^{\bar{p}(x)}.
\end{equation*}
Moreover, since the assumption \eqref{eq:p-log-Holder} yields
\begin{equation} \label{eq:log-Holder}
L^{-1} \leq \min\lbrace 1, \rho^{\osc_{B_{\rho}(x)}p(x, \cdot)} \rbrace \leq \rho^{p(x, x+\rho A\xi)-\bar{p}(x)} \leq \max\lbrace 1, \rho^{-\osc_{B_{\rho}(x)}p(x, \cdot)}\rbrace \leq L,
\end{equation}
$I$ can be further estimated as
\begin{equation*}
\begin{split}
I
&\geq \frac{(1-\varepsilon)^2}{L^{1-s}} \left( \int_{0}^{r} \int_{\lbrace |\xi_{n}| \geq \theta_{0} \rbrace} |\xi_{n}|^{\bar{p}(x)} \rho^{-1+(1-s) \bar{p}(x)} \,\mathrm{d}\mathcal{H}^{n-1}(\xi) \,\mathrm{d}\rho \right) |\nabla u(x)|^{\bar{p}(x)} \\
&\geq \frac{(1-\varepsilon)^2}{(1-s)L^{1-s}} \left( \int_{\lbrace |\xi_{n}| \geq \theta_{0} \rbrace} |\xi_{n}|^{\bar{p}(x)} \,\mathrm{d}\mathcal{H}^{n-1}(\xi) \right) \frac{r^{(1-s)\bar{p}(x)}}{\bar{p}(x)} |\nabla u(x)|^{\bar{p}(x)}.
\end{split}
\end{equation*}
Since
\begin{equation*}
\begin{split}
\int_{\lbrace |\xi_{n}| \geq \theta_{0} \rbrace} |\xi_{n}|^{\bar{p}(x)} \,\mathrm{d}\mathcal{H}^{n-1}(\xi)
&\geq \int_{\mathbb{S}^{n-1}} |\xi_{n}|^{\bar{p}(x)} \,\mathrm{d}\mathcal{H}^{n-1}(\xi) - \int_{\lbrace |\xi_{n}| < \theta_{0} \rbrace} \theta_{0}^{p_{-}} \,\mathrm{d}\mathcal{H}^{n-1}(\xi) \\
&\geq \bar{p}(x) K_{n, \bar{p}(x)} - \theta_{0} |\mathbb{S}^{n-1}|,
\end{split}
\end{equation*}
we arrive at
\begin{equation} \label{eq:liminf-I-lower}
I \geq \frac{(1-\varepsilon)^2}{(1-s)L^{1-s}} \left( \bar{p}(x) K_{n, \bar{p}(x)} - \theta_0 |\mathbb{S}^{n-1}| \right) \frac{r^{(1-s)\bar{p}(x)}}{\bar{p}(x)} |\nabla u(x)|^{\bar{p}(x)}.
\end{equation}

For the last term on the right-hand side of \eqref{eq:liminf-I}, we use the assumption \eqref{eq:p-bound} to have
\begin{equation} \label{eq:liminf-I-upper}
\int_{B_{r}} |h|^{-n+(2-s)p(x, x+h)} \,\mathrm{d}h \leq \int_{B_{r}} |h|^{-n+(2-s)p_{-}} \,\mathrm{d}h \leq |\mathbb{S}^{n-1}| \frac{r^{(2-s)p_{-}}}{(2-s)p_{-}}.
\end{equation}
Therefore, combining \eqref{eq:liminf-I}, \eqref{eq:liminf-I-lower}, \eqref{eq:liminf-I-upper} and integrating over $\Omega \cap B_{R}$ yield
\begin{equation*}
\begin{split}
&(1-\varepsilon)^{2} \frac{s}{L^{1-s}} \int_{\Omega \cap B_{R}} \left( \bar{p}(x) K_{n, \bar{p}(x)} - \theta_{0} |\mathbb{S}^{n-1}| \right) \frac{r^{(1-s)\bar{p}(x)}}{\bar{p}(x)} |\nabla u(x)|^{\bar{p}(x)} \,\mathrm{d}x \\
&\leq (1+\varepsilon) s(1-s) \int_{\Omega} \int_{\Omega} \frac{|u(x)-u(y)|^{p(x, y)}}{|x-y|^{n+sp(x, y)}} \,\mathrm{d}y \,\mathrm{d}x + C \frac{s(1-s)}{2-s} r^{(2-s)p_{-}}
\end{split}
\end{equation*}
for some $C > 0$ independent of $s$. We take $\liminf_{s \nearrow 1}$ on both sides to deduce
\begin{equation*}
\begin{split}
&(1-\varepsilon)^{2} \int_{\Omega \cap B_{R}} \left( \bar{p}(x) K_{n, \bar{p}(x)} - \theta_{0} |\mathbb{S}^{n-1}| \right) \frac{1}{\bar{p}(x)} |\nabla u(x)|^{\bar{p}(x)} \,\mathrm{d}x \\
&\leq (1+\varepsilon) \liminf_{s \nearrow 1} s(1-s) \int_{\Omega} \int_{\Omega} \frac{|u(x)-u(y)|^{p(x, y)}}{|x-y|^{n+sp(x, y)}} \,\mathrm{d}y \,\mathrm{d}x.
\end{split}
\end{equation*}
Since $\varepsilon$, $\theta_0$ and $R$ are arbitrarily chosen, we conclude \eqref{eq:liminf}.
\end{proof}

\begin{lemma} \label{lem:limsup}
Let $\Omega \subset \mathbb{R}^{n}$ be open and assume that $p$ satisfies \eqref{eq:p-bound} and \eqref{eq:p-log-Holder}. Then
\begin{equation} \label{eq:limsup}
\limsup_{s \nearrow 1} s(1-s) \int_{\Omega} \int_{\Omega} \frac{|u(x)-u(y)|^{p(x, y)}}{|x-y|^{n+sp(x, y)}} \,\mathrm{d}y \,\mathrm{d}x \leq \int_{\Omega} K_{n, \bar{p}(x)} |\nabla u(x)|^{\bar{p}(x)} \,\mathrm{d}x
\end{equation}
for any $u \in C^{2}_{c}(\mathbb{R}^n)$.
\end{lemma}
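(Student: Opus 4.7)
The plan is to mirror \Cref{lem:liminf} in reverse, chasing upper bounds. The new ingredient is the compact support of $u \in C^{2}_{c}(\mathbb{R}^{n})$: this is needed to control contributions from pairs $(x,y)$ with $|x-y|$ bounded away from $0$, which in \Cref{lem:liminf} were simply discarded as nonnegative.

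I would fix $\varepsilon > 0$ and $\theta_{0} \in (0,1)$, choose $R > 0$ with $\operatorname{supp}(u) \subset B_{R}$, and, for each $x \in \Omega$, invoke \eqref{eq:p-log-Holder} to pick $r = r(x) \in (0, \min\{\mathrm{dist}(x, \partial\Omega), 1\})$ small enough that the analogues of \eqref{eq:r1} and \eqref{eq:r2} hold. Then I would split the inner integral into the near piece $B_{r}(x) \cap \Omega$ and the far piece $\Omega \setminus B_{r}(x)$. For the near piece, Taylor's formula \eqref{eq:second-difference} together with \eqref{eq:alg-ineq} gives
\[
|u(x)-u(y)|^{p(x,y)} \leq (1+\varepsilon) |\nabla u(x) \cdot (y-x)|^{p(x,y)} + C_{\varepsilon} |y-x|^{2p(x,y)}.
\]
The Taylor remainder is estimated exactly as in \eqref{eq:liminf-I-upper} and, once multiplied by $s(1-s)$, contributes $O(s(1-s)) \to 0$. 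The leading term, after spherical coordinates and the same rotation as in \eqref{eq:I}, is split over the sphere into $\{|\xi_{n}| \geq \theta_{0}\}$ and $\{|\xi_{n}| < \theta_{0}\}$: the upper halves of \eqref{eq:r1}, \eqref{eq:r2} and \eqref{eq:log-Holder} convert the former into the main contribution
\[
(1+\varepsilon)^{2} L^{1-s} K_{n,\bar{p}(x)} |\nabla u(x)|^{\bar{p}(x)} \frac{r^{(1-s)\bar{p}(x)}}{1-s},
\]
while the bound $|\xi_{n}|^{p(x,y)} \leq \theta_{0}^{p_{-}}$ makes the latter an additive $O(\theta_{0}^{p_{-}})$ error.

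The far piece is where the compact support of $u$ is essential: either $x$ or $y$ must lie in $B_{R}$, and $|u(x) - u(y)| \leq 2\|u\|_{\infty}$; splitting $|x-y|$ into the regimes $[r(x), 1)$ and $[1, +\infty)$ yields a bound that is bounded in $s$, and the prefactor $s(1-s)$ annihilates it as $s \nearrow 1$. Combining both pieces, multiplying by $s(1-s)$, and integrating over $x$ gives
\[
s(1-s) \int_{\Omega}\int_{\Omega} \cdots \leq s L^{1-s} \int_{\Omega} (1+\varepsilon)^{2} [(1+\varepsilon) K_{n, \bar{p}(x)} + C\theta_{0}^{p_{-}}] |\nabla u(x)|^{\bar{p}(x)} r(x)^{(1-s)\bar{p}(x)} \,\mathrm{d}x + o(1)
\]
as $s \nearrow 1$. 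Since $r(x) \leq 1$ gives $r(x)^{(1-s)\bar{p}(x)} \leq 1$ with pointwise limit $1$ as $s \nearrow 1$, dominated convergence (with dominating function $K_{n, \bar{p}(x)} |\nabla u(x)|^{\bar{p}(x)}$, integrable by compact support of $u$) lets me pass to the limit under the integral. Finally sending $\theta_{0} \to 0$ and then $\varepsilon \to 0$ yields \eqref{eq:limsup}.

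The hardest part will be the far piece: every contribution must be upper-bounded, and the compactness of $\operatorname{supp}(u)$ is precisely what makes this possible. A secondary subtlety is that $r(x)$ can shrink to $0$ as $x$ approaches $\partial\Omega$, so the argument must be arranged so that the final passage to the limit depends only on the qualitative bound $r(x) \leq 1$ and never on a uniform positive lower bound.
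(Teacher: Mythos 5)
Your approach is essentially correct and close in spirit to the paper's, but organized differently. The paper factors the argument through two lemmas: \Cref{lem:DCT} produces a single $s$-independent dominating function $F\in L^1(\Omega)$ with $F_s\le F$ for all $s\in[s_0,1)$ (using compact support and a decay estimate $F_s(x)\lesssim |x|^{-n-s_0p_-}$ for large $|x|$), and \Cref{lem:limit-Fs} identifies the pointwise limit $F_s(x)\to K_{n,\bar p(x)}|\nabla u(x)|^{\bar p(x)}$; a single application of dominated convergence then gives the full limit, not just the limsup. You instead construct an explicit $s$-dependent majorant $G_s(x)$ of $F_s(x)$—a main term plus errors—and pass to the limit in $\int G_s$. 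That buys you a direct upper bound without needing a uniform dominating function for $F_s$ itself, at the price of proving a limsup rather than a limit and of having to manage several error terms.

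Two points where you are a little too casual. First, for the far piece your phrase ``yields a bound that is bounded in $s$, and the prefactor $s(1-s)$ annihilates it'' is not quite accurate in the regime $r(x)\le|x-y|<1$: the bound $\int_{r(x)}^1\rho^{-1+(1-s)p_-}\,d\rho = \frac{1-r(x)^{(1-s)p_-}}{(1-s)p_-}$ is bounded in $s$ for each fixed $x$ (by $-\log r(x)$) but not uniformly in $x$, since $r(x)$ can degenerate as $x$ approaches $\partial\Omega$ or as the modulus of continuity of $p(x,\cdot)$ at $x$ worsens. After multiplying by $s(1-s)$ the correct bound is $\frac{s}{p_-}\bigl(1-r(x)^{(1-s)p_-}\bigr)\le \frac{1}{p_-}$, which is indeed dominated on the bounded region where it is nonzero, but passing $\int_\Omega$ of this error to zero again requires dominated convergence, not mere ``annihilation by $s(1-s)$''. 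Since you do write ``$+o(1)$'' after integrating, this is more a matter of exposition than of substance, but the reasoning should be made explicit. Second, the dominating function should be a constant multiple of $K_{n,\bar p(x)}|\nabla u(x)|^{\bar p(x)}$ (to absorb the factors $(1+\varepsilon)^2[(1+\varepsilon)K_{n,\bar p(x)}+C\theta_0^{p_-}]$ and $sL^{1-s}$), which is harmless since $\bar p$ is bounded and $u$ has compact support. With these small repairs your argument goes through and gives the same conclusion.
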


To prove \Cref{lem:limsup}, we use the dominated convergence theorem. For this purpose, we first prove the following lemma.

\begin{lemma} \label{lem:DCT}
Assume that $p$ satisfies \eqref{eq:p-bound}. Let $s_{0} \in (0,1)$ and $u \in C^{1}_{c}(\mathbb{R}^{n})$. Then, there exists a function $F \in L^{1}(\Omega)$ such that
\begin{equation} \label{eq:Fs}
F_{s}(x):= s(1-s) \int_{\Omega} \frac{|u(x)-u(y)|^{p(x, y)}}{|x-y|^{n+sp(x, y)}} \,\mathrm{d}y \leq F(x), \quad x \in \Omega,
\end{equation}
for all $s \in [s_{0} ,1)$.
\end{lemma}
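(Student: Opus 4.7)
The idea is to decompose $F_s(x)$ according to the size of $|x-y|$ and bound each piece by a function in $L^1(\Omega)$ uniformly for $s \in [s_0, 1)$. Concretely, write
\[
F_s(x) = F_s^{\mathrm{near}}(x) + F_s^{\mathrm{far}}(x),
\]
where $F_s^{\mathrm{near}}$ integrates over $\{y \in \Omega : |x-y| < 1\}$ and $F_s^{\mathrm{far}}$ over $\{y \in \Omega : |x-y| \geq 1\}$. Each piece will be controlled separately, using the $C^1$-regularity of $u$ for the diagonal singularity and the compact support of $u$ for integrability at infinity.

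\textbf{Near part.} Since $u \in C^1_c(\mathbb{R}^n)$, the mean value theorem gives $|u(x)-u(y)| \leq \|\nabla u\|_\infty |x-y|$. For $|x-y| < 1$, raising to the power $p(x,y)$ and using $p(x,y) \geq p_-$ yields
\[
\frac{|u(x)-u(y)|^{p(x,y)}}{|x-y|^{n+sp(x,y)}} \leq C\, |x-y|^{(1-s)p_- - n},
\]
with $C$ depending only on $\|\nabla u\|_\infty$, $p_-$, $p_+$. Integrating in spherical coordinates over $B_1(x)$ produces a factor $|\mathbb{S}^{n-1}|/((1-s)p_-)$, and the prefactor $s(1-s)$ cancels the $(1-s)^{-1}$, giving a bound $F_s^{\mathrm{near}}(x) \leq C_1$ uniform in $x$ and $s$. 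Next, if $\mathrm{dist}(x,\mathrm{supp}\,u) > 1$, then $u \equiv 0$ both at $x$ and on $B_1(x)$, so $F_s^{\mathrm{near}}(x) = 0$. Therefore $F_s^{\mathrm{near}} \leq C_1\, \mathbf{1}_K$ where $K := \{x : \mathrm{dist}(x,\mathrm{supp}\,u) \leq 1\}$ is compact, and this bound lies in $L^1(\Omega)$.

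\textbf{Far part.} Using $|u(x)-u(y)| \leq 2\|u\|_\infty$ together with $|x-y| \geq 1$ (so that $|x-y|^{-n-sp(x,y)} \leq |x-y|^{-n-sp_-}$), one gets the bound
\[
\frac{|u(x)-u(y)|^{p(x,y)}}{|x-y|^{n+sp(x,y)}} \leq \frac{C}{|x-y|^{n+sp_-}},
\]
with $C = C(\|u\|_\infty, p_-, p_+)$. Fix $R$ with $\mathrm{supp}\,u \subset B_R$. For $x \in B_{2R}$, integrating over $\{|x-y|\geq 1\}$ gives a uniform bound $C/(sp_-) \leq C/(s_0 p_-)$, which is integrable on the bounded set $B_{2R}$. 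For $|x| > 2R$, we have $u(x) = 0$, so the integrand is nonzero only for $y \in \mathrm{supp}\,u \subset B_R$, for which $|x-y| \geq |x| - R \geq |x|/2$. Integrating yields
\[
F_s^{\mathrm{far}}(x) \leq C\, |x|^{-n-sp_-} \leq C\, |x|^{-n-s_0 p_-},
\]
which is integrable on $\{|x| > 2R\}$ since $s_0 p_- > 0$. Summing the near and far bounds gives the desired $F \in L^1(\Omega)$.

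\textbf{Main obstacle.} The one place where care is needed is in verifying that all constants are truly uniform in $s$. The BBM-style cancellation between the $(1-s)$ factor in $F_s$ and the $(1-s)^{-1}$ from the diagonal singular integral must be reproduced in the variable-exponent setting, which is why one replaces $p(x,y)$ by $p_-$ in the near region (exploiting $|x-y|<1$) and by $p_-$ in the far region (exploiting $|x-y|\geq 1$). The hypothesis $s \geq s_0$ is used only to secure the polynomial decay rate $n + s_0 p_- > n$ needed for integrability of the far tail at infinity; the diagonal cancellation itself is automatic for any $s < 1$.
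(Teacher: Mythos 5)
Your proof is correct and takes essentially the same approach as the paper: split the integral defining $F_s(x)$ at $|x-y|=1$, control the near part via the Lipschitz bound on $u$ (with the factor $(1-s)$ cancelling the $(1-s)^{-1}$ from the radial integral), control the far part via $\|u\|_\infty$ and $|x-y|\geq 1$, and then use the compact support of $u$ together with $s\geq s_0$ to get $|x|^{-n-s_0 p_-}$ decay at infinity. The only cosmetic difference is that you additionally observe the near part vanishes outside a compact neighborhood of $\mathrm{supp}\,u$, whereas the paper simply notes the uniform constant bound suffices for bounded $\Omega$ and patches in the decay estimate for the unbounded case; both lead to the same dominating function.
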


\begin{proof}
Let us write $F_{s}(x) = I_{1} + I_{2}$, where
\begin{equation*}
\begin{split}
I_{1} &= s(1-s) \int_{\Omega \cap B_{1}(x)} \frac{|u(x)-u(y)|^{p(x, y)}}{|x-y|^{n+sp(x, y)}} \,\mathrm{d}y \quad\text{and}\\
I_{2} &= s(1-s) \int_{\Omega \setminus B_{1}(x)} \frac{|u(x)-u(y)|^{p(x, y)}}{|x-y|^{n+sp(x, y)}} \,\mathrm{d}y.
\end{split}
\end{equation*}
Using
\begin{equation*}
|u(y)-u(x)| \leq \int_{0}^{1} |\nabla u(x+t(y-x)) \cdot (y-x)| \,\mathrm{d}t \leq \|\nabla u\|_{L^{\infty}(\mathbb{R}^{n})} |y-x|
\end{equation*}
and \eqref{eq:p-bound}, we have
\begin{equation} \label{eq:DCT-I1}
\begin{split}
I_{1}
&\leq (1-s) \int_{\Omega \cap B_{1}(x)} \|\nabla u\|_{L^{\infty}(\mathbb{R}^{n})}^{p(x, y)} |x-y|^{-n+(1-s)p(x, y)} \,\mathrm{d}y \\
&\leq (1-s) \left( \|\nabla u\|_{L^{\infty}(\mathbb{R}^{n})}^{p_{+}} +  \|\nabla u\|_{L^{\infty}(\mathbb{R}^{n})}^{p_{-}} \right) \int_{B_{1}} |h|^{-n+(1-s)p_{-}} \,\mathrm{d}h \\
&= \frac{|\mathbb{S}^{n-1}|}{p_{-}} \left( \|\nabla u\|_{L^{\infty}(\mathbb{R}^{n})}^{p_{+}} +  \|\nabla u\|_{L^{\infty}(\mathbb{R}^{n})}^{p_{-}} \right).
\end{split}
\end{equation}
Moreover, using \eqref{eq:p-bound} again, we obtain
\begin{equation} \label{eq:DCT-I2}
\begin{split}
I_{2}
&\leq 2^{p_+} s \int_{\Omega \setminus B_{1}(x)} \|u\|_{L^{\infty}(\mathbb{R}^{n})}^{p(x, x+h)} |x-y|^{-n-sp(x, y)} \,\mathrm{d}y \\
&\leq 2^{p_+} s \left( \|u\|_{L^{\infty}(\mathbb{R}^{n})}^{p_{+}} + \|u\|_{L^{\infty}(\mathbb{R}^{n})}^{p_{-}} \right) \int_{\mathbb{R}^{n} \setminus B_{1}} |h|^{-n-sp_{-}} \,\mathrm{d}h \\
&= 2^{p_{+}} \frac{|\mathbb{S}^{n-1}|}{p_{-}} \left( \|u\|_{L^{\infty}(\mathbb{R}^{n})}^{p_{+}} + \|u\|_{L^{\infty}(\mathbb{R}^{n})}^{p_{-}} \right).
\end{split}
\end{equation}
Thus, it follows from \eqref{eq:DCT-I1} and \eqref{eq:DCT-I2}
\begin{equation} \label{eq:Fs-bound}
F_{s} = I_{1} + I_{2} \leq C \quad\text{in } \Omega
\end{equation}
for all $s \in (0,1)$, where $C$ is a constant depending on $n$, $p_{+}$, $p_{-}$ and $\|u\|_{C^1(\mathbb{R}^{n})}$. This finishes the proof when $\Omega$ is bounded.

Let us next consider the case when $\Omega$ is bounded. We prove that $F_{s}$ is bounded by an integrable function for all $s \in [s_{0},1)$. To this end, let us consider a large ball $B_{R/2}$ containing $\mathrm{supp} \,u$. We may assume that $R > 2$. Then, for $x \in \Omega \setminus B_{R}$, we have
\begin{equation*}
F_{s}(x) = s(1-s) \int_{\Omega \cap B_{R/2}} \frac{|u(y)|^{p(x, y)}}{|x-y|^{n+sp(x, y)}} \,\mathrm{d}y.
\end{equation*}
Since $|x-y| \geq |x|-R/2 \geq |x|/2 \geq 1$, using \eqref{eq:p-bound} we obtain
\begin{equation} \label{eq:Fs-decay}
F_{s}(x) \leq |B_{R/2}| \left( \|u\|_{L^{\infty}(\mathbb{R}^{n})}^{p_{+}} + \|u\|_{L^{\infty}(\mathbb{R}^{n})}^{p_{-}} \right) \left( \frac{2}{|x|} \right)^{n+s_{0} p_{-}} \quad\text{for } x \in \Omega \setminus B_{R}.
\end{equation}
Combining \eqref{eq:Fs-bound} and \eqref{eq:Fs-decay}, we conclude that
\begin{equation*}
F_{s}(x) \leq C \min \left\lbrace \frac{1}{R^{n+s_{0}p_{-}}}, \frac{1}{|x|^{n+s_{0}p_{-}}} \right\rbrace =: F(x) \in L^{1}(\Omega),
\end{equation*}
where $C$ is a constant depending on $n$, $p_{+}$, $p_{-}$, $s_{0}$ and $u$.
\end{proof}

In the next lemma, we find a pointwise limit of $F_{s}$ as $s \nearrow 1$.

\begin{lemma} \label{lem:limit-Fs}
Assume that $p$ satisfies \eqref{eq:p-bound} and \eqref{eq:p-log-Holder}. Let $u \in C^{2}_{c}(\mathbb{R}^{n})$ and let $F_{s}$ be given as in \eqref{eq:Fs}. Then
\begin{equation*}
\lim_{s \nearrow 1} F_{s}(x) = K_{n, \bar{p}(x)} |\nabla u(x)|^{\bar{p}(x)}
\end{equation*}
for all $x \in \Omega$.
\end{lemma}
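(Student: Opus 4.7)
The plan is to fix $x \in \Omega$ and derive matching upper and lower bounds on $F_s(x)$ whose common limit as $s \nearrow 1$ is $K_{n,\bar{p}(x)}|\nabla u(x)|^{\bar{p}(x)}$. I would begin by splitting $F_s(x) = F_s^{\mathrm{near}}(x) + F_s^{\mathrm{far}}(x)$ at radius $r \in (0, \min\{\mathrm{dist}(x,\partial\Omega),1\})$. Because $u \in C^{2}_{c}(\mathbb{R}^n)$, the $y$-integral over $\Omega \setminus B_r(x)$ in $F_s^{\mathrm{far}}(x)$ is uniformly bounded in $s \in [s_0,1)$ (essentially as in the proof of \Cref{lem:DCT}), so the prefactor $(1-s)$ forces $F_s^{\mathrm{far}}(x) \to 0$. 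On the near piece I would combine \eqref{eq:second-difference} with \eqref{eq:alg-ineq}, applied in both directions, to sandwich, for any $\varepsilon > 0$ and $h \in B_r$,
\begin{equation*}
(1-\varepsilon)|\nabla u(x)\cdot h|^{p(x,x+h)} - C_\varepsilon |h|^{2p(x,x+h)} \leq |u(x+h)-u(x)|^{p(x,x+h)} \leq (1+\varepsilon)|\nabla u(x)\cdot h|^{p(x,x+h)} + C_\varepsilon |h|^{2p(x,x+h)}.
\end{equation*}
The remainder contributes at most $Cs(1-s) r^{(2-s)p_-}$, which vanishes as $s \nearrow 1$, so the problem reduces to computing the limit of $s(1-s) J(s,r)$, where
\begin{equation*}
J(s,r) := \int_{B_r} \frac{|\nabla u(x)\cdot h|^{p(x,x+h)}}{|h|^{n+sp(x,x+h)}}\,\mathrm{d}h.
\end{equation*}

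Assuming $|\nabla u(x)| \neq 0$ (the degenerate case is immediate), I would introduce a rotation $A \in O(n)$ with $A e_n = \nabla u(x)/|\nabla u(x)|$ and pass to spherical coordinates exactly as in the proof of \Cref{lem:liminf}:
\begin{equation*}
J(s,r) = \int_0^r \int_{\mathbb{S}^{n-1}} |\nabla u(x)|^{p(x,x+\rho A\xi)}|\xi_n|^{p(x,x+\rho A\xi)} \rho^{-1+(1-s)p(x,x+\rho A\xi)}\,\mathrm{d}\mathcal{H}^{n-1}(\xi)\,\mathrm{d}\rho.
\end{equation*}
Assumption \eqref{eq:p-log-Holder} forces $\osc_{B_\rho(x)} p(x,\cdot) \to 0$ as $\rho \to 0$, so given any $\varepsilon' > 0$ the radius $r$ can be further shrunk to ensure $|p(x,y)-\bar{p}(x)| < \varepsilon'$ for $y \in B_r(x)$, and it simultaneously supplies the uniform radial bound $\rho^{p(x,y)-\bar{p}(x)} \in [L^{-1},L]$ valid for $\rho \in (0,r]$ and $y \in B_\rho(x)$. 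Replacing each occurrence of $p(x,x+\rho A\xi)$ by $\bar{p}(x)$ up to these factors, the radial integral reduces to $s(1-s)\int_0^r \rho^{-1+(1-s)\bar{p}(x)}\,\mathrm{d}\rho = s\,r^{(1-s)\bar{p}(x)}/\bar{p}(x) \to 1/\bar{p}(x)$, while the angular integral approaches $\int_{\mathbb{S}^{n-1}}|\xi_n|^{\bar{p}(x)}\,\mathrm{d}\mathcal{H}^{n-1}(\xi) = \bar{p}(x)K_{n,\bar{p}(x)}$. Sending $s \nearrow 1$, then $r \to 0$, and finally $\varepsilon, \varepsilon' \to 0$ extracts the claimed value.

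The principal obstacle is the angular factor $|\xi_n|^{p(x,x+\rho A\xi)}$ near $\xi_n = 0$: when $p(x,y) < \bar{p}(x)$ the correction $|\xi_n|^{p(x,y)-\bar{p}(x)}$ blows up as $\xi_n \to 0$, ruling out a direct pointwise replacement. I would circumvent this with the two-sided envelope $|\xi_n|^{p(x,y)} \in [|\xi_n|^{\bar{p}(x)+\varepsilon'}, |\xi_n|^{\bar{p}(x)-\varepsilon'}]$ valid for $|\xi_n|\leq 1$ whenever $|p(x,y)-\bar{p}(x)| < \varepsilon'$; since $\bar{p}(x)-\varepsilon' > 1$ these powers retain integrable singularities on the sphere, and the spherical integrals $(\bar{p}(x)\pm\varepsilon')K_{n,\bar{p}(x)\pm\varepsilon'}$ both converge to $\bar{p}(x)K_{n,\bar{p}(x)}$ as $\varepsilon' \to 0$ by continuity of the Gamma function in the explicit formula for $K_{n,q}$.
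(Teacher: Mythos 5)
Your proof is correct and mirrors the paper's: the same near/far decomposition, the Taylor sandwich via \eqref{eq:alg-ineq}, the passage to spherical coordinates with a rotation aligning $\nabla u(x)$ with $e_n$, the log-H\"older control of the radial factor $\rho^{p(x,x+\rho A\xi)-\bar{p}(x)}$, and the iterated limits $s \nearrow 1$ followed by the auxiliary parameters. The one technical variation is in taming the angular factor near $\xi_n = 0$: you use the uniform envelope $|\xi_n|^{\bar{p}(x)+\varepsilon'} \le |\xi_n|^{p(x,y)} \le |\xi_n|^{\bar{p}(x)-\varepsilon'}$ together with continuity of $q \mapsto K_{n,q}$, whereas the paper restricts to $\{|\xi_n| \ge \theta_0\}$ via \eqref{eq:r2} and absorbs the $\{|\xi_n| < \theta_0\}$ contribution into a $\theta_0|\mathbb{S}^{n-1}|$ error term before sending $\theta_0 \to 0$; both are equally valid.
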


\begin{proof}
Let $u \in C^{2}_{c}(\mathbb{R}^{n})$, then there exists a constant $C > 0$, depending on $\|u\|_{C^{2}(\mathbb{R}^{n})}$, such that
\begin{equation} \label{eq:gradient}
|u(y)-u(x)| \leq |\nabla u(x) \cdot (y-x)| + C|y-x|^2
\end{equation}
holds for all $x, y \in \mathbb{R}^{n}$. Let $\varepsilon > 0$ and $\theta_{0} \in (0,1)$. Similarly as in the proof of \Cref{lem:liminf}, for a fixed point $x \in \Omega$ we find $r_{0} \in (0,1)$ such that \eqref{eq:r1} and \eqref{eq:r2} hold for all $y \in B_{r_{0}}(x)$. Then, using \eqref{eq:gradient} and \eqref{eq:alg-ineq} we have
\begin{equation} \label{eq:limsup-I12}
\begin{split}
\int_{\Omega} \frac{|u(x)-u(y)|^{p(x, y)}}{|x-y|^{n+sp(x, y)}} \,\mathrm{d}y
&\leq (1+\varepsilon) \int_{B_{r}(x)} \frac{|\nabla u(x) \cdot (y-x)|^{p(x, y)}}{|x-y|^{n+sp(x, y)}} \,\mathrm{d}y \\
&\quad + C \int_{B_{r}(x)} \frac{|x-y|^{2p(x, y)}}{|x-y|^{n+sp(x, y)}} \,\mathrm{d}y \\
&\quad + \int_{\Omega \setminus B_{r}(x)} \frac{|u(x)-u(y)|^{p(x, y)}}{|x-y|^{n+sp(x, y)}} \,\mathrm{d}y \\
&=: I_{1} + I_{2} + I_{3}
\end{split}
\end{equation}
for $r \in (0, r_{0})$.

Let us estimate $I_{1}$, $I_{2}$ and $I_{3}$. We first recall from \eqref{eq:I} and \eqref{eq:liminf-I-upper} that
\begin{equation*}
I_{1} = (1+\varepsilon) \int_{0}^{r} \int_{\mathbb{S}^{n-1}} |\nabla u(x)|^{p(x, x+\rho A\xi)} |\xi_{n}|^{p(x, x+\rho A \xi)} \rho^{-1+(1-s)p(x, x+\rho A\xi)} \,\mathrm{d}\mathcal{H}^{n-1}(\xi) \,\mathrm{d}\rho
\end{equation*}
and
\begin{equation} \label{eq:limsup-I2}
I_{2} \leq C \frac{r^{(2-s)p_-}}{2-s}
\end{equation}
hold. For $I_{1}$, we use \eqref{eq:r1} and \eqref{eq:log-Holder} to have
\begin{equation*}
I_{1} \leq (1+\varepsilon)^{2} L^{1-s} \left( \int_{0}^{r} \int_{\mathbb{S}^{n-1}} |\xi_{n}|^{p(x, x+\rho A\xi)} \,\mathrm{d}\mathcal{H}^{n-1}(\xi) \,\rho^{-1+(1-s)\bar{p}(x)} \,\mathrm{d}\rho \right) |\nabla u(x)|^{\bar{p}(x)}.
\end{equation*}
Moreover, by using \eqref{eq:r2} we obtain
\begin{equation*}
\begin{split}
\int_{\mathbb{S}^{n-1}} |\xi_{n}|^{p(x, x+\rho A\xi)} \,\mathrm{d}\mathcal{H}^{n-1}(\xi)
&\leq \int_{\lbrace |\xi_{n}| \geq \theta_{0} \rbrace} |\xi_n|^{p(x, x+\rho A\xi)} \,\mathrm{d}\mathcal{H}^{n-1}(\xi) + \theta_{0} |\mathbb{S}^{n-1}| \\
&\leq (1+\varepsilon) \bar{p}(x) K_{n, \bar{p}(x)} + \theta_{0} |\mathbb{S}^{n-1}|.
\end{split}
\end{equation*}
Thus, we deduce
\begin{equation} \label{eq:limsup-I1}
I_{1} \leq (1+\varepsilon)^{2} \frac{L^{1-s}}{1-s} \left( (1+\varepsilon) \bar{p}(x) K_{n, \bar{p}(x)} + \theta_{0} |\mathbb{S}^{n-1}| \right) \frac{r^{(1-s)\bar{p}(x)}}{\bar{p}(x)} |\nabla u(x)|^{\bar{p}(x)}.
\end{equation}
For $I_{3}$, it follows from $|x-y|^{-sp(x, y)} \leq r^{sp_{-} - sp(x, y)} |x-y|^{-sp_{-}} \leq r^{s(p_{-} - p_{+})} |x-y|^{-sp_{-}}$
\begin{equation} \label{eq:limsup-I3}
I_{3} \leq \frac{2^{p_{+}} |\mathbb{S}^{n-1}|}{sp_{-}} \left( \|u\|_{L^{\infty}(\mathbb{R}^{n})}^{p_{+}} + \|u\|_{L^{\infty}(\mathbb{R}^{n})}^{p_{-}} \right) r^{-sp_{+}}.
\end{equation}
Combining \eqref{eq:limsup-I12}--\eqref{eq:limsup-I3}, we arrive at
\begin{equation*}
\begin{split}
F_{s}(x)
&= s(1-s) (I_{1} + I_{2} + I_{3}) \\
&\leq (1+\varepsilon)^{2} sL^{1-s} \left( (1+\varepsilon) \bar{p}(x) K_{n, \bar{p}(x)} + \theta_{0} |\mathbb{S}^{n-1}| \right) \frac{r^{(1-s)\bar{p}(x)}}{\bar{p}(x)} |\nabla u(x)|^{\bar{p}(x)} \\
&\quad + C \frac{s(1-s)}{2-s} r^{(2-s)p_{-}} + \frac{2^{p_{+}} |\mathbb{S}^{n-1}|}{p_{-}} (1-s) \left( \|u\|_{L^{\infty}(\mathbb{R}^{n})}^{p_{+}} + \|u\|_{L^{\infty}(\mathbb{R}^{n})}^{p_{-}} \right) r^{-sp_{+}},
\end{split}
\end{equation*}
and hence
\begin{equation*}
\limsup_{s \nearrow 1} F_{s}(x) \leq (1+\varepsilon)^{2} \left( (1+\varepsilon) \bar{p}(x) K_{n, \bar{p}(x)} + \theta_{0} |\mathbb{S}^{n-1}| \right) \frac{1}{\bar{p}(x)} |\nabla u(x)|^{\bar{p}(x)}.
\end{equation*}
Since $\varepsilon$ and $\theta_{0}$ are arbitrarily chosen, we deduce $\limsup_{s \nearrow 1} F_{s}(x) \leq K_{n, \bar{p}(x)} |\nabla u(x)|^{\bar{p}(x)}$. Recall from \eqref{eq:liminf-I-lower}
\begin{equation*}
F_{s}(x) \geq (1+\varepsilon)(1-\varepsilon)^{2} \frac{s}{L^{1-s}} \left( \bar{p}(x) K_{n, \bar{p}(x)} - \theta_{0} |\mathbb{S}^{n-1}| \right) \frac{r^{(1-s)\bar{p}(x)}}{\bar{p}(x)} |\nabla u(x)|^{\bar{p}(x)},
\end{equation*}
which results in $\liminf_{s \nearrow 1} F_{s}(x) \geq K_{n, \bar{p}(x)} |\nabla u(x)|^{\bar{p}(x)}$, finishing the proof.
\end{proof}

\Cref{lem:limsup} follows from \Cref{lem:DCT} and \Cref{lem:limit-Fs}. Moreover, \Cref{thm:conv} is an immediate consequence of \Cref{lem:liminf}, \Cref{lem:limsup} and the dominated convergence theorem. Let us finish this section by providing the proof of \Cref{cor:conv}.

\begin{proof} [Proof of \Cref{cor:conv}]
Since $\Omega$ is a $W^{1, p_{+}}$- and $W^{1, p_{-}}$-extension domain, we may assume that $u \in W^{1, p_{+}}(\mathbb{R}^{n}) \cap W^{1, p_{-}}(\mathbb{R}^{n})$. There exists a sequence $\lbrace u_{k} \rbrace \subset C^{2}_{c}(\mathbb{R}^{n})$ such that $u_{k} \to u$ in $W^{1, p_{+}}(\mathbb{R}^{n})$ and $W^{1, p_{-}}(\mathbb{R}^{n})$ as $k \to \infty$. By the triangle inequality, we have
\begin{equation*}
\begin{split}
\left| \varrho_{W^{s, p(\cdot, \cdot)}(\Omega)}(u) - \tilde{\varrho}_{L^{\bar{p}(\cdot)}(\Omega)}(|\nabla u|) \right|
&\leq \left| \varrho_{W^{s, p(\cdot, \cdot)}(\Omega)}(u) - \varrho_{W^{s, p(\cdot, \cdot)}(\Omega)}(u_{k}) \right| \\
&\quad + \left| \varrho_{W^{s, p(\cdot, \cdot)}(\Omega)}(u_{k}) - \tilde{\varrho}_{L^{\bar{p}(\cdot)}(\Omega)}(|\nabla u_{k}|) \right| \\
&\quad + \left| \tilde{\varrho}_{L^{\bar{p}(\cdot)}(\Omega)}(|\nabla u_{k}|) - \tilde{\varrho}_{L^{\bar{p}(\cdot)}(\Omega)}(|\nabla u|) \right| \\
&=: I_{1} + I_{2} + I_{3},
\end{split}
\end{equation*}
where
\begin{equation*}
\tilde{\varrho}_{L^{\bar{p}(\cdot)}(\Omega)}(v) = \int_{\Omega} K_{n, \bar{p}}(x) |v(x)|^{\bar{p}(x)} \,\mathrm{d}x.
\end{equation*}
Since $W^{1, p_{+}}(\mathbb{R}^{n}) \cap W^{1, p_{-}}(\mathbb{R}^{n}) \subset W^{s, p(\cdot, \cdot)}(\mathbb{R}^{n}) \subset W^{s, p(\cdot, \cdot)}(\Omega)$, $u_{k} \to u$ in $W^{s, p(\cdot, \cdot)}(\Omega)$ as $k \to \infty$. Thus, by \Cref{lem:modular2}, we have $I_{1} \to 0$ as $k \to \infty$. Obviously, we have $I_{3} \to 0$ as $k \to \infty$. Therefore, for $\varepsilon > 0$, we obtain
\begin{equation*}
\left| \varrho_{W^{s, p(\cdot, \cdot)}(\Omega)}(u) - \tilde{\varrho}_{L^{\bar{p}(\cdot)}(\Omega)}(|\nabla u|) \right| \leq \varepsilon + I_{2}
\end{equation*}
for sufficiently large $k$. By \Cref{thm:conv} we arrive at
\begin{equation*}
\limsup_{s \nearrow 1} \left| \varrho_{W^{s, p(\cdot, \cdot)}(\Omega)}(u) - \tilde{\varrho}_{L^{\bar{p}(\cdot)}(\Omega)}(|\nabla u|) \right| \leq \varepsilon,
\end{equation*}
which conclude the theorem.
\end{proof}


\section{Failure of the limiting embedding} \label{sec:failure}


Having \Cref{thm:conv} at hand, one may expect the limiting embedding $W^{s, p(\cdot, \cdot)} \to W^{1, \bar{p}(\cdot)}$ as a natural extension of the limiting embedding $W^{s, p} \to W^{1, p}$. However, in this section, we provide an example of a variable exponent $p(\cdot, \cdot)$ such that the limiting embedding $W^{s, p(\cdot, \cdot)} \to W^{1, \bar{p}(\cdot)}$ fails. As explained in the introduction, the idea is to construct an unnatural fractional Sobolev space as one of the interpolations of the given Lebesgue and Sobolev spaces. Such a strange space can be constructed even from $L^{p}$ and $W^{1, p}$ with a constant $p$. In the following proof, we find a variable exponent $p(\cdot, \cdot)$ such that $\bar{p}(x) = p(x, x) \equiv p$ is a constant and that $W^{1, p} \not\subset W^{s, p(\cdot, \cdot)}$ for all $s \in (0,1)$.

\begin{proof} [Proof of \Cref{thm:failure}]
We may assume that $\Omega$ contains $\overline{B}_{2}$. The idea of proof is to construct a variable exponent $p$ so that $W^{s, p(\cdot, \cdot)}(\Omega)$ requires more integrability than $W^{1, \bar{p}(\cdot)}(\Omega)$. To this end, let $p: \Omega \times \Omega \to \mathbb{R}$ be a smooth function satisfying \eqref{eq:p-bound}. Slightly abusing notation, we assume that $p(x, y) = p(|x-y|)$ for all $x, y \in \Omega$. Then, $\bar{p} = \bar{p}(x) = p(x, x) = p(0)$ is a constant. For $q \in (1, n)$, we assume
\begin{equation} \label{eq:p}
1 < \bar{p} < n/q \quad\text{and}\quad p(r) \geq n/(q-1) \quad\text{for } r \geq 1.
\end{equation}
The assumption \eqref{eq:p} shows how $W^{s, p(\cdot, \cdot)}(\Omega)$ requires more integrability than $W^{1, \bar{p}}(\Omega)$.

We consider a smooth decreasing function $\phi: (0, +\infty) \to [0, +\infty)$ satisfying $\phi(t) = |t|^{1-q}$ for $t \in (0, 1]$, $0 \leq \phi(t) \leq 1$ for $t \in [1, 2]$ and $\phi(t) = 0$ for $t \in [2, +\infty)$. We claim that a function $u: \Omega \to [0, +\infty]$ defined by $u(x) = \phi(|x|)$ belongs to $W^{1, \bar{p}}(\Omega) \setminus W^{s, p(\cdot, \cdot)}(\Omega)$ for all $s \in (0,1)$. Indeed, a straightforward computation shows
\begin{equation*}
\int_{\Omega} |u(x)|^{\bar{p}} \,\mathrm{d}x \leq \int_{B_{1}} |x|^{-(q-1)\bar{p}} \,\mathrm{d}x + |B_{2} \setminus B_{1}| < +\infty
\end{equation*}
and
\begin{equation*}
\int_{\Omega} |\nabla u(x)|^{\bar{p}} \,\mathrm{d}x \leq (q-1)^{\bar{p}} \int_{B_{1}} |x|^{-q\bar{p}} \,\mathrm{d}x + C |B_{2} \setminus B_{1}| < + \infty.
\end{equation*}
Thus, $u \in W^{1, \bar{p}}(\Omega)$.

It remains to prove $u \not\in W^{s, p(\cdot, \cdot)}(\Omega)$ for all $s \in (0,1)$. Since $u=0$ outside $B_{2}$, we have
\begin{equation*}
\varrho_{s, p(\cdot, \cdot)}(u) \geq s(1-s) \int_{\Omega \setminus B_{2}} \int_{B_{1}} \frac{|y|^{-(q-1)p(|x-y|)}}{|x-y|^{n+sp(|x-y|)}} \,\mathrm{d}y \,\mathrm{d}x.
\end{equation*}
If $x \in \Omega \setminus B_{2}$ and $y \in B_{1}$, then $1 \leq |x-y| \leq |x|+1 \leq \frac{3}{2} |x|$. Thus, by using \eqref{eq:p-bound} and \eqref{eq:p}, we obtain
\begin{equation*}
\varrho_{s, p(\cdot, \cdot)}(u) \geq s(1-s) \int_{\Omega \setminus B_{2}} \int_{B_{1}} \frac{|y|^{-n}}{(\frac{3}{2}|x|)^{n+sp_+}} \,\mathrm{d}y \,\mathrm{d}x = +\infty.
\end{equation*}
Therefore, it follows from \Cref{lem:modular1} $u \notin W^{s, p(\cdot, \cdot)}(\Omega)$ for all $s \in (0,1)$.
\end{proof}



\end{document}